\newtheorem{theorem}{Theorem}
\newtheorem{lemma}{Lemma}[section]
\newtheorem{problem}{Problem}
\newtheorem{proposition}[lemma]{Proposition}
\newtheorem{definition}[lemma]{Definition}
\newtheorem{remark}[lemma]{Remark}
\theoremstyle{definition}
\newcommand \alp{\alpha}
\newcommand \vphi{\varphi}
\newcommand \Gam{\Gamma}
\newcommand \gam{\gamma}
\newcommand \om{\omega}
\newcommand \tx{\text}
\newcommand \R{\mathbb{R}}
\newcommand \til{\tilde}
\newcommand \der{\partial}
\newcommand \mcl{\mathcal}
\newcommand \ol{\overline}
\newcommand \Om{\Omega}
\newcommand \N{\mcl{N}}
\newcommand \q{{\bf q}}
\newcommand \s{{\bf s}}
\newcommand \corners{\Gamw}
\newcommand \Sen{S_{en}}
\newcommand \pex{p_{ex}}
\newcommand \rx{{\rm x}}
\newcommand \ry{{\rm y}}
\newcommand \rhos{\rho_c}
\newcommand \Gamen{\Gam_0}
\newcommand \Gamex{\Gam_L}
\newcommand \Gamw{\Gam_w}
\def\mcF{\mathcal{F}}
\def \msB {\mathscr{B}}
\def \msK {\mathscr{K}}
\def\mfF{\mathfrak{F}}
\def\mfG{\mathfrak{G}}
\def\mff{\mathfrak{f}}
\def\mfU{\mathfrak{U}}
\def\mfV{\mathfrak{V}}
\def\mfp{\mathfrak{p}}
\def\mfg{\mathfrak{g}}
\def\Div{{\rm div}}
\def\Curl{{\rm curl}}
\def\bu{\mathbf{u}}
\def\bv{\mathbf{v}}
\def\bI{\mathbf{I}}
\def\mE{\mathcal{E}}
\def\mfe{\mathfrak{e}}
\def\mclU{\mcl{U}}
\def\mclW{\mcl{W}}
\def\mclWen{\mcl{W}_{en}}
\newcommand \mfraka{\mathfrak{a}}
\newcommand \mfrakb{\mathfrak{b}}
\newcommand \mfrakc{\mathfrak{c}}
\newcommand \mfrakd{\mathfrak{d}}
\numberwithin{equation}{section}
\begin{document}
\title[Subsonic flow for full Euler-Poisson system]
{Subsonic solutions for steady Euler-Poisson system in two dimensional nozzles}

\author{Myoungjean Bae}
\address{M. Bae, Department of Mathematics\\
         POSTECH\\
         San 31, Hyojadong, Namgu, Pohang, Gyungbuk, Republic of Korea}
\email{mjbae@postech.ac.kr or mybjean@gmail.com}
\author{Ben Duan}
\address{B. Duan, Department of Mathematics\\
         POSTECH\\
         San 31, Hyojadong, Namgu, Pohang, Gyungbuk, Republic of Korea}
\email{bduan@postech.ac.kr}

\author{Chunjing Xie}
\address{C. Xie, Department of mathematics, Institute of Natural Sciences, Ministry of Education Key Laboratory of Scientific and Engineering Computing, Shanghai Jiao Tong University\\
800 Dongchuan Road, Shanghai, China
}
\email{cjxie@sjtu.edu.cn}

\date{}

\keywords{Euler-Poisson system, subsonic flow, Helmholtz decomposition, stream function,  elliptic system}
\subjclass[2010]{35J57, 35J66, 35M10, 76N10}

\date{\today}

\maketitle


\begin{abstract}

In this paper, we prove the existence and stability of subsonic flows for steady full Euler-Poisson system in a two dimensional nozzle of finite length when imposing the electric potential difference on non-insulated boundary from a fixed point at the entrance, and prescribing the pressure at the exit of the nozzle.
The Euler-Poisson system for subsonic flow is a hyperbolic-elliptic coupled nonlinear system. One of the crucial ingredient of this work is the combination of Helmholtz decomposition for the velocity field and stream function formulation together. In terms of the Helmholtz decomposition, the Euler-Poisson system is rewritten as a second order nonlinear elliptic system of three equations and transport equations for entropy and pseudo-Bernoulli's invariant. The associated elliptic system in a Lipschitz domain with nonlinear boundary conditions is solved with the help of the estimates developed in \cite{BDX} based on its nice structure. The transport equations are resolved via the flow map induced by the stream function formulation. Furthermore, the delicate estimates for the flow map  give the uniqueness of the solutions.

\end{abstract}

\bigskip
\section{Introduction}
\label{section-1}

The following Euler-Poisson system
\begin{equation}\label{UnsteadyEP}
\left\{
\begin{aligned}
& \rho_t+ \Div (\rho \bu)=0, \\
& (\rho \bu)_t+\Div (\rho \bu \otimes \bu +p\bI)=\rho \nabla \Phi, \\
& (\rho \mE)_t +\Div(\rho\mE \bu +p\bu)=\rho \bu\cdot \nabla\Phi,\\
& \Delta \Phi=\rho-b(\rx)
\end{aligned}%
\right.
\end{equation}
models several physical flows including the propagation of electrons in
submicron semiconductor devices and plasmas (cf. \cite{MarkRSbook})(hydrodynamic
model), and the biological transport of ions for channel proteins (cf. \cite{Shu}). In the hydrodynamical model of semiconductor devices or plasmas, $\bu,
\rho$, $p$, and $\mE$ represent the macroscopic particle velocity, electron density,
pressure, and the total energy, respectively.  The electric potential $\Phi$ is  generated by the
Coulomb force of particles. $\bI$ is the identity matrix and $b(\rx)>0$ stands for the density of fixed,
positively charged background ions. The biological model describes the
transport of ions between the extracellular side and the cytoplasmic side of
the membranes(\cite{Shu}). In this case, $\rho$, $\rho \bu$, and $\Phi$ are the ion
concentration, the ions translational mass, and the electric potential,
respectively. The equations \eqref{UnsteadyEP} are closed with the aid of definition of total energy and the equation of state
\begin{equation}
\mE=\frac{|\bu|^2}{2}+\mfe\,\, \text{and}\,\,\quad p=p(\rho, \mfe),
\end{equation}
where $\mfe$ is the internal energy.
In this paper, we consider ideal polytropic gas for which the pressure $p$ is given by
\begin{equation}
\label{2d-1-b1}
p(\rho, \mfe)=(\gamma-1)\rho\mfe
\end{equation}
where $\gam>1$ is called the adiabatic constant. In terms of the entropy $S$, one also has
\begin{equation}\label{pentropy}
p(\rho, S)=\mfp \exp(\frac{S}{c_v})\rho^\gam,
\end{equation}
where $\mfp$ and $c_v$ are positive constants.
In fact, all results in this paper are true for flows with general equation of states.

The global weak solution and formation of singularity for classical solution for \eqref{UnsteadyEP} in one dimensional case was studied in \cite{ChenWang, WangChen}.
Recently, there are a few studies  for \eqref{UnsteadyEP} under the small perturbations of constant states, see \cite{Guo99, Guo, IP} and references therein. Our goal is to understand general steady solutions for \eqref{UnsteadyEP},  in particular, the steady transonic solutions for \eqref{UnsteadyEP}, and their stability.

In $\R^2$, let $u$ and $v$ denote the horizontal and vertical components of velocity ${\bf u}$. Then, the steady Euler-Poisson system is written as
\begin{equation}\label{2-a2}
\begin{cases}
(\rho u)_{x_1}+(\rho v)_{x_2}=0\\
(\rho u^2+p)_{x_1}+(\rho uv)_{x_2}=\rho \Phi_{x_1}\\
(\rho uv)_{x_1}+(\rho v^2+p)_{x_2}=\rho \Phi_{x_2}\\
(\rho u \msB)_{x_1}+(\rho v \msB)_{x_2}=\rho {\bf u}\cdot \nabla\Phi\\
\Delta\Phi=\rho-b(\rx)
\end{cases}
\end{equation}%
for $\rx=(x_1,x_2)\in\R^2$ where the Bernoulli's function $\msB$ is given by
\begin{equation}
\label{2d-1-a5}
\msB(\rho,{\bf u}, S)=\frac{|\bu|^2}{2}+e+\frac{p}{\rho}
=\frac{|\bu|^2}{2}+\frac{\gam\mfp}{\gam-1} \exp(\frac{S}{c_v})\rho^{\gam-1}.
\end{equation}

One of the main difficulties is that the equations \eqref{2-a2} change type when the flow speed varies from subsonic ($|\bu|< \sqrt{p_\rho(\rho, S)}$) to supersonic ($|\bu|> \sqrt{p_\rho(\rho, S)}$).
There have been a few studies on transonic solutions of the Euler-Poisson system(cf.\cite{MarkPhase, Gamba1d, LRXX, LuoXin, RosiniPhase}). A special transonic solution for one dimensional Euler-Poisson equations was investigated in \cite{MarkPhase}. General transonic solutions for one dimensional Euler-Poisson equations was established in \cite{Gamba1d} by vanishing viscosity method. However, the fine structure of the solutions obtained in \cite{Gamba1d} other than BV functions is not clear. A thorough study  of one-dimensional transonic shock solutions to the Euler-Poisson equations with a constant background charge  was given in \cite{LuoXin}. One of interesting phenomenon in \cite{LuoXin} is that there are multiple solutions for the one dimensional system under given boundary conditions.  So it is natural to ask whether these one dimensional transonic solutions are dynamically and structurally stable.
The dynamical stability of transonic shock solutions for one dimensional Euler-Poisson system is achieved in \cite{LRXX}. Furthermore, it reveals that the effect of electric force in the Euler-Poisson system has the same physical effect as the geometry of divergent domain in the Euler system, see \cite{LuoXin, LRXX}. This makes multidimensional structural stability problems for transonic shocks of the Euler-Poisson equations physically important and interesting.  Recently, there are extensive studies and significant progress on transonic shock solutions of the Euler system, see \cite{Bae-F, CF1, Schen-Yu1, LXY1, XinYinCPAM} and references therein. However, the Poisson equation makes the fluid variables in the Euler-Poisson system coupled in a nonlocal and nonlinear way so that multidimensional stability problems for transonic flows of the Euler-Poisson system are essentially open.  Therefore, the Euler-Poisson system is worthy of studying not only physically but also mathematically.  An effort in studying multidimensional transonic flows for the Euler-Poisson equations was made in \cite{GambaMorawetz} for a viscous approximation of transonic solutions in two dimensional case. However, the zero viscosity limit in \cite{GambaMorawetz} remains an open problem.

As the first step to investigate stability of multidimensional transonic flow of the Euler-Poisson system under perturbations of exit pressure, we establish the unique existence and stability of subsonic flows of steady Euler-Poisson system under perturbations of the exit pressure and electric potential difference on non-insulated boundary.

When the current flux is sufficiently small, the existence of subsonic solutions was proved in \cite{DeMark1d, DeMark3d, Guo, MarkZAMP,Yeh, Hattori}. The existence of solutions with large data is also obtained in \cite{DeMark1d, DeMark3d} when the convection term has a sufficiently small parameter.  To our knowledge, the first result about structural stability of subsonic solutions with large variations for the Euler-Poisson system under perturbations of exit pressure is given in \cite{BDX}. When the flows is subsonic, the Euler-Poisson system for isentropic potential flow can be written as a second order quasi-linear elliptic system for the velocity potential $\vphi$ and the electric potential $\Phi$. In \cite{BDX}, it is discovered that a linearized system of the elliptic system for $(\vphi,\Phi)$ has a nice structure which enables a priori $H^1$ estimate for weak solutions of the associated linear elliptic system, which is one of the key ingredients to establish unique existence and stability of subsonic potential flow in multidimensional nozzle under perturbations of exit pressure. Similar result is obtained in \cite{BDX2} for two dimensional Euler-Poisson system with gravitational potential using stream function formulation.

In this paper, we study the existence and stability of two dimensional subsonic flows for full Euler-Poisson system with non-zero vorticity and non-constant entropy. The study for the flows with non-constant entropy and non-zero vorticity is important because the transonic shock problem for isentropic model might be ill-posed as the case in gas dynamics \cite{XinYinCPAM}. Our goal is to  prove unique existence and stability of subsonic solutions to \eqref{2-a2} in two dimensional nozzle of finite length under perturbations of exit pressure and electric potential difference on non-insulated boundary from a fixed point at the entrance. The direct computation shows that it is hard to get the existence of solutions for the system for the stream function and the electric potential.  To overcome this difficulty, we  write ${\bf u}=\nabla\vphi+\nabla^{\perp}{\psi}$ for $\nabla^{\perp}=(\der_{x_2}, -\der_{x_1})$.  There are two advantages of this decomposition. The first is that \eqref{2-a2} can be written as a second order quasi-linear system for $(\vphi,\psi, \Phi)$ and two homogeneous transport equations for the entropy and pseudo-Bernoulli's invariant. The second is that the system for $(\vphi,\psi,\Phi)$ can be weakly decomposed into a second order elliptic system for $(\vphi,\Phi)$ and a Poisson equation for $\psi$, and the elliptic system for $(\vphi,\Phi)$ has the nice structure so that a priori $H^1$ estimate of weak solutions is possible. Because the transport equations do not gain regularity, we use the stream function formulation to represent the solution in terms of the flow map which can be also used to prove the uniqueness of solutions.

As far as we know, this is the first work which treats Euler-Poisson system using both Helmholtz decomposition and stream function approach together. This combination of two different methods requires subtle and careful analysis to solve  \eqref{2-a2} with nonlinear boundary condition.

There are a lot of other studies on the Euler-Poisson system with relaxations, see \cite{LuoNX, LiMark, Huang, Guo} and the references therein.

The rest of the paper is organized as follows. The problem and main result are stated in Section \ref{section-main-thm}. In Section \ref{sectionreformulation}, we reformulate \eqref{2-a2} as a nonlinear problem  by Helmholtz decomposition, and state unique solvability of the reformulated problem as Theorem \ref{nf-theorem3}. Then we prove Theorem \ref{nf-theorem1} by  Theorem \ref{nf-theorem3}. In Section \ref{section-nlbvp}, unique solvability of the elliptic system for $(\vphi,\psi,\Phi)$ is proved. Finally, Theorem \ref{nf-theorem3} is proved in Section \ref{section-pf-theorem2} with the careful analysis for the flow map.

\section{The Problem and Main Results}
\label{section-main-thm}

Let  $b_0>0$ be a fixed constant.
Consider a solution $(\rho, u,v, p, \Phi)$ of \eqref{2-a2} with $b=b_0$, $v=0= \Phi_{x_2}$, $\rho>0$ and $u>0$. Set $E:=\Phi_{x_1}$. Then $(\rho, u, p, E)$ satisfy the nonlinear ODE system
\begin{equation}
\label{2d-1-a1}
\begin{cases}
(\rho u)'=0\\
S'=0\\
\msB'=E\\
E'=\rho-b_0
\end{cases}
\end{equation}
where $'$ denotes the derivative with respect to $x_1$. It follows from the first two equations in \eqref{2d-1-a1} that one has
\begin{equation}
\label{2d-1-a2}
\rho u= {J_0}\quad \text{and} \quad S= S_0
\end{equation}
where the constants $J_0>0$ and $S_0>0$ are determined by initial data. Therefore, \eqref{2d-1-a1} can be written as an ODE system for $(\rho, E)$ as follows:
\begin{equation}
\label{2d-1-a3}
\begin{cases}
\rho'=\frac{\rho E}{\gam \mfp \exp(\frac{S_0}{c_v})\rho^{\gam-1}-\frac{J_0^2}{\rho^2}},\\
E'=\rho-b_0.
\end{cases}
\end{equation}

\begin{proposition}
\label{proposition-21}
Fix $b_0>0$. Given constants $J_0>0$, $S_0>0$, $\rho_0>\rhos:= \left(\frac{J_0^2}{\gam \mfp \exp(\frac{S_0}{c_v})}\right)^{\frac{1}{\gam+1}}$ and $E_0$, there exists positive constants $L$, $\rho_\sharp$, ${\rho}^\sharp$, and $\nu_0$ such that the initial value problem \eqref{2d-1-a3} with
\begin{equation}
\label{iv}
\rho(0)=\rho_0\quad \text{and}\quad
E(0)=E_0
\end{equation}
has a unique smooth solution $(\rho(x), E(x))$ on the interval $[0,L]$ satisfying
\begin{equation}
\label{2-a7}
0< {\rho}_\sharp \le \rho\le {\rho}^\sharp \quad\tx{and}\quad \gam \mfp \exp(\frac{S_0}{c_v})\rho^{\gam-1}-\frac{J_0^2}{\rho^2}\ge \nu_0\;\;\tx{on}\;\;[0,L].
\end{equation}
\end{proposition}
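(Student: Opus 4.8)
The plan is to run a standard continuation argument for the planar autonomous system \eqref{2d-1-a3}; the only point needing care is that the denominator in the first equation vanishes exactly at the sonic density $\rhos$, so the solution must be kept bounded away from $\rhos$. Abbreviate $g(\rho):=\gam\mfp\exp(\frac{S_0}{c_v})\rho^{\gam-1}-\frac{J_0^2}{\rho^2}$, so that \eqref{2d-1-a3} reads $(\rho',E')=F(\rho,E)$ with $F(\rho,E):=\bigl(\frac{\rho E}{g(\rho)},\,\rho-b_0\bigr)$. A direct computation gives
\[
g'(\rho)=\gam(\gam-1)\mfp\exp\!\Big(\tfrac{S_0}{c_v}\Big)\rho^{\gam-2}+\frac{2J_0^2}{\rho^3}>0\qquad\text{for all }\rho>0,
\]
so $g$ is strictly increasing on $(0,\infty)$; moreover $g(\rhos)=0$ by the definition of $\rhos$. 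Hence $g(\rho)>0$ precisely for $\rho>\rhos$, and $F$ is real-analytic on $\{\rho>\rhos\}\times\R$ --- in particular near $(\rho_0,E_0)$, since $\rho_0>\rhos$.

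Next I would fix the constants in the following order, which avoids any circularity: put $\rho_\sharp:=\tfrac12(\rhos+\rho_0)$ and $\nu_0:=g(\rho_\sharp)$, so that $0<\rho_\sharp<\rho_0$ and, by monotonicity of $g$, $\nu_0>0$ and $g(\rho)\ge\nu_0$ whenever $\rho\ge\rho_\sharp$; then put $\rho^\sharp:=\rho_0+1$, $M:=|E_0|+1$, and $K:=[\rho_\sharp,\rho^\sharp]\times[-M,M]$. On a neighbourhood of $K$ one has $g(\rho)\ge g(\rho_\sharp)=\nu_0>0$, so there $F$ is $C^\infty$, hence bounded and Lipschitz; set $C_1:=1+\sup_K|F|$. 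Finally put $L:=\frac{1}{2C_1}\min\{\rho_0-\rho_\sharp,\ \rho^\sharp-\rho_0,\ 1\}>0$. Note that $C_1$, and therefore $L$, depends on the previously selected $\rho^\sharp$ and $M$, but not conversely.

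Now the argument itself. Since $(\rho_0,E_0)$ lies in the interior of $K$, Picard--Lindel\"{o}f yields a unique $C^1$ solution $(\rho,E)$ of \eqref{2d-1-a3}--\eqref{iv} on a maximal interval $[0,T_{\max})$ along which it stays in $K$ (equivalently, one first replaces $F$ by a globally Lipschitz function agreeing with $F$ on $K$, solves globally, and tracks the orbit). While the solution stays in $K$ we have $|\rho'|\le C_1$ and $|E'|\le C_1$, hence $|\rho(x)-\rho_0|\le C_1x$ and $|E(x)-E_0|\le C_1x$ for $x\in[0,T_{\max})$. If $T_{\max}\le L$, these bounds would confine $(\rho(x),E(x))$, for all such $x$, to the box $\bigl[\tfrac12(\rho_0+\rho_\sharp),\,\tfrac12(\rho_0+\rho^\sharp)\bigr]\times\bigl[-(|E_0|+\tfrac12),\,|E_0|+\tfrac12\bigr]$, which is a compact subset of the interior of $K$; by uniform continuity the limit as $x\uparrow T_{\max}$ exists and lies in that box, so the solution extends past $T_{\max}$ without leaving $K$, contradicting maximality. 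Therefore $T_{\max}>L$, the solution exists on all of $[0,L]$, and there $0<\rho_\sharp\le\rho\le\rho^\sharp$. Consequently $g(\rho)\ge\nu_0$ on $[0,L]$ by monotonicity of $g$, which is \eqref{2-a7}; smoothness of $(\rho,E)$ on $[0,L]$ follows by bootstrapping \eqref{2d-1-a3} (or from analyticity of $F$ on $\{\rho>\rhos\}$), and uniqueness is part of Picard--Lindel\"{o}f.

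The genuine difficulty here is only organizational: the a priori derivative bounds must be closed on a \emph{fixed} compact box $K$ chosen \emph{before} $L$. The only analytic inputs are $g'>0$ on $(0,\infty)$ and $g(\rhos)=0$; together these say exactly how far $\rho_0$ sits inside the subsonic region $\{\rho>\rhos\}$, which is what makes the choice of $\rho_\sharp$, $\nu_0$, and $L$ possible.
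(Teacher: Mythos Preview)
Your argument is correct: the monotonicity of $g$ and $g(\rhos)=0$ pin down the subsonic region, and the box--continuation argument with Picard--Lindel\"of is cleanly organized (fixing $K$ before $L$ avoids the usual circularity). The paper itself does not prove this proposition---it simply defers to \cite{LuoXin}---so there is no in-paper proof to compare against; your self-contained local existence argument is exactly the kind of proof one would expect and is entirely adequate for the statement as written.
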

The detailed proof for Proposition \ref{proposition-21} can be found in \cite{LuoXin}, so we omit it.

From now on, suppose that $J_0>0$, $S_0>0$, $b_0>0$, $\rho_0>\rhos$ and $E_0$ are fixed constants. Let $L>0$ be as in Proposition \ref{proposition-21}.
Set $\N:=(0, L)\times (0,1)$ be a fixed flat nozzle in $\R^2$.
The nozzle boundary $\partial\N$ consists of
$$
\Gamen=\{0\}\times[0,1],\quad \Gamex=\{L\}\times[0,1], \quad\text{and}\,\,\Gamma_w=(0, L)\times \{0, 1\}$$
which are the  entrance, the exit, and the insulated boundary of $\N$, respectively.  For the unique solution $(\rho, E)$ of \eqref{iv} obtained in Proposition \ref{proposition-21}, let $u$ and $p$ be defined by \eqref{2d-1-a2} and \eqref{pentropy}, respectively.
From now on, denote $\bar{\rho}(\rx)=\rho(x_1)$, $\bar{\bf u}(\rx)=(u(x_1),0)$, $\bar p(\rx)=p(x_1)$, respectively, for $\rx=(x_1,x_2)\in\N$. Later on, we also denote $\bar u(x_1)=u(x_1)$. Define  $\Phi_0(\rx)$ and $\vphi_0(\rx)$ by
\begin{equation}
\label{2-b6}
\Phi_0(\rx)=\int_{0}^{x_1}E(t)dt\quad\text{and}\quad
\vphi_0(\rx)=\int_0^{x_1}\bar u(t)\;dt
\end{equation}
for $\rx=(x_1,x_2)\in\N$.
\begin{remark}
It follows from \eqref{2-b6} that $\Phi_0$ can also be regarded as the electric potential difference between the points in $\N$ and a given point at the entrance.
\end{remark}

It is easy to see that $(\rho, {\bf u}, p, \Phi)=(\bar{\rho},\bar{\bf u}, \bar p,\Phi_0)$ satisfy \eqref{2-a2} in $\N$.

\begin{definition}
\label{definition-1}
The solution $(\bar{\rho},\bar{\bf u}, \bar p, \Phi_0)$ of \eqref{2-a2} that is obtained in Proposition \ref{proposition-21} and satisfies \eqref{2-a7} in $\N$ is called a \emph{subsonic background solution} associated with the parameters  $b_0$, $S_0$, $J_0$, $\rho_0$, $E_0$ and $L$.
\end{definition}
Before we state our main problem and main result, weighted H\"{o}lder norms are introduced first. For a bounded connected open set $\Om\subset\R^n$, let $\Gam$ be a closed portion of $\der\Om$. For $\rx, \ry\in\Om$, set
\begin{equation*}
\delta_{\rx}:={\rm dist}(\rx,\Gam)\quad \text{and}\quad  \delta_{\rx,\ry}:=\min(\delta_{\rx},\delta_{\ry}).
\end{equation*}
Given $k\in\R$, $\alp\in(0,1)$ and $m\in \mathbb{Z}^+$, define the standard H\"{o}lder norms by
\begin{align*}
&\|u\|_{m,\Om}:=\sum_{0\le|\beta|\le m}\sup_{\rx\in \Om}|D^{\beta}u(\rx)|,\quad
[u]_{m,\alp,\Om}:=\sum_{|\beta|=m}\sup_{\rx, \ry\in\Om,\rx\neq  \ry}\frac{|D^{\beta}u(\rx)-D^{\beta}u(\ry)|}{|\rx-\ry|^{\alp}},
\end{align*}
and the weighted H\"{o}lder norms by
\begin{align*}
&\|u\|_{m,0,\Om}^{(k,\Gam)}:=\sum_{0\le|\beta|\le m}\sup_{\rx\in \Om}\delta_{\rx}^{\max(|\beta|+k,0)}|D^{\beta}u(\rx)|,\\
&[u]_{m,\alp,\Om}^{(k,\Gam)}:=\sum_{|\beta|=m}\sup_{\rx,\ry\in\Om, \rx\neq \ry}\delta_{\rx,\ry}^{\max(m+\alp+k,0)}\frac{|D^{\beta}u(\rx)-D^{\beta}u(\ry)|}{|\rx-\ry|^{\alp}},\\
&\|u\|_{m,\alp,\Om}:=\|u\|_{m,\Om}+[u]_{m,\alp,\Om},\quad
\|u\|_{m,\alp,\Om}^{(k,\Gam)}:=\|u\|_{m,0,\Om}^{(k,\Gam)}+[u]_{m,\alp,\Om}^{(k,\Gam)},
\end{align*}
where $D^{\beta}$ denotes $\der_{x_1}^{\beta_1}\cdots\der_{x_n}^{\beta_n}$ for a multi-index $\beta=(\beta_1,\cdots,\beta_n)$ with $\beta_j\in\mathbb{Z}_+$ and $|\beta|=\sum_{j=1}^n\beta_j$. $C^{m,\alp}_{(k,\Gam)}(\Om)$ denotes the completion of the set of all smooth functions whose $\|\cdot\|_{m,\alp,\Om}^{(k,\Gam)}$ norms are finite. For simplicity,  denote $\|\cdot\|_{0,\alp,\Om}$ by $\|\cdot\|_{\alp,\Om}$. For a vector function $\bv=(v_1, \cdots, v_d)$, denote $\|\bv\|_{m, \alpha, \Omega}^{(k, \Gamma)}=\sum_{i=1}^d \|v_i\|_{m, \alpha, \Omega}^{(k, \Gamma)}$.

Our main concern is to solve the following problem.
\begin{problem}
\label{problem1}
Fix  a point $\bar{\rx}=(\bar x_1,0)\in\Gamen$. Suppose that $(b,\Sen, \msB_{en}, \Phi_{bd},\pex)$ satisfy
\begin{equation*}
\begin{split}
&\|b-b_0\|_{\alp,\N}+\|(\Sen,\msB_{en})-(S_0,\msB_0)\|_{1,\alp,\Gamen}\\
 &+ \|\Phi_{bd}-\Phi_0\|_{2,\alp,\Gamen\cup \Gamex}^{(-1-\alp,\partial(\Gamen\cup \Gamex ))}
+\|\pex-\bar{p}\|_{1,\alp,\Gamex}^{(-\alp,\der\Gamex)}\le \sigma
\end{split}
\end{equation*}
with sufficiently small $\sigma>0$ where $\msB_0=\msB(\bar\rho(\bar \rx), \bar\bu(\bar \rx), S_0)$ and $\alpha\in (0, 1)$,
find a solution $(\rho, \bu, p, \Phi)$ to \eqref{2-a2} in $\N$ satisfying the boundary conditions
\begin{align}
\label{2-b9}
&S=S_{en},\quad \msB=\msB_{en},\;\; {\bf u}\cdot{\bm \tau}=0\quad\tx{on}\;\;\Gam_0\\
\label{2-b9-1}
&\Phi-\Phi(\bar{\rx})=\Phi_{bd}\;\;\tx{on}\;\;\Gam_0\cup\Gamex\\
\label{2-b9-4}
&\bu \cdot{\bf n}_w=\der_{{\bf n}_w}\Phi=0\;\;\tx{on}\;\;\Gam_w,\\
\label{2-b9-3}
&p=p_{ex}\;\;\tx{on}\;\;\Gamex.
\end{align}
\end{problem}
\begin{remark}
It is easy to see that the background solutions $(\bar\rho, \bar\bu, \bar p, \Phi_0)$ solves the Problem \ref{problem1} with $\sigma=0$. So Problem \ref{problem1} can be regarded as the stability problem for the background solutions.
\end{remark}
\begin{remark}
It follows from the similar analysis as that in \cite{BDX} that the boundary conditions \eqref{2-b9}-\eqref{2-b9-3} in the one dimensional setting are equivalent to prescribe the density at both the entrance and the exit for the system \eqref{2d-1-a3}.
\end{remark}

Before we state our main result in this paper, let us introduce the following notation: we say a constant $C$ depending on the data if it depends on $b_0$, $S_0$, $J_0$, $\rho_0$, $E_0$ and $L$.

Our main result in this paper is as follows.

\begin{theorem}
\label{nf-theorem1}
Suppose that $(\bar{\rho},\bar{\bf u}, \bar p, \Phi_0)$  is  the subsonic background solution in $\N$ associated with parameters $b_0>0$, $S_0>0$, $J_0>0$, $\rho_0>\rhos$, $E_0$, and $L$.
Assume that $\Phi_{bd}$ satisfies the compatibility condition
\begin{equation}
\label{compt}
\der_{x_2}\Phi_{bd}=0\quad\tx{on}\quad \ol{\Gam_w}\cap(\ol{\Gamen}\cup\ol{\Gamex}).
\end{equation}
\begin{itemize}
\item[(a)](Existence)  There exists a $\sigma_1>0$ depending on the data and $\alp$ so that if
\begin{equation} \label{theorem1-est1}
\om_1(b)+\om_2(\Sen, \msB_{en})+\om_3(\Phi_{bd},\pex)\le \sigma_1,
\end{equation}
where
\begin{equation}\label{theorem1-as1}
\begin{aligned}
&\om_1(b):=\|b-b_0\|_{\alp,\N},\quad \om_2(\Sen, \msB_{en}):=\|(\Sen,\msB_{en})-(S_0,\msB_0)\|_{1,\alp,\Gamen},\\
&\om_3(\Phi_{bd},\pex):=\|\Phi_{bd}-\Phi_0\|_{2,\alp,\Gamen\cup\Gamex}^{(-1-\alp,\partial(\Gamen\cup\Gamex))}
+\|\pex-\bar{p}\|_{1,\alp,\Gamex}^{(-\alp,\der\Gamex)},
\end{aligned}
\end{equation}
then the boundary value problem \eqref{2-a2} with \eqref{2-b9}--\eqref{2-b9-3} has a solution $(\rho, \bu, p,\Phi)$ satisfying
\begin{equation}
\label{theorem1-est2}
\begin{split}
&\|(\rho, {\bf u}, p)-(\bar{\rho},\bar{\bf u}, \bar p)\|_{1,\alp,\N}^{(-\alp,\corners)}
+\|\Phi-\Phi_0\|_{2,\alp,\N}^{(-1-\alp,\corners)}\\
&\le C\left(\om_1(b)+\om_2(\Sen, \msB_{en})+\om_3(\Phi_{bd},\pex)\right)
\end{split}
\end{equation}
where the constant $C$ depends only on the data and $\alp$.

\item[(b)](Uniqueness)  There exists a $\sigma_2>0$ depending on the data, $\alpha$, and $\mu$  such that if
    \begin{equation}
       \label{theorem1-est3}
       \om_1(b)+\om_2(\Sen, \msB_{en})+\om_3(\Phi_{bd},\pex)+\om_4(\Sen, \msB_{en}, \Phi_{bd})\le \sigma_2,
    \end{equation}
with $\alp \in (\frac{1}{2}, 1)$ and $\mu \in (2, \infty)$    where
    \begin{equation}
    \om_4(\Sen, \msB_{en},\Phi_{bd}):=\|(\Sen,\msB_{en}-\Phi_{bd})-(S_0,\msB_0)\|_{W^{2,\mu}(\Gamen)},
    \end{equation}
then the solution $(\rho, \bu, p,\Phi)$ obtained in (a) is unique.
\end{itemize}
\end{theorem}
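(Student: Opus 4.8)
The plan is to recast Problem \ref{problem1}, by means of the Helmholtz decomposition, as an elliptic boundary value problem for $(\vphi,\psi,\Phi)$ coupled with transport equations for the entropy $S$ and the pseudo-Bernoulli invariant $\msB-\Phi$ — this being the content of Theorem \ref{nf-theorem3} — and then to solve that reformulated problem. Writing $\bu=\nabla\vphi+\nabla^\perp\psi$ and using $\Div(\nabla^\perp\psi)=0$, the continuity equation becomes $\Div(\rho\nabla\vphi)=-\nabla\rho\cdot\nabla^\perp\psi$, where by \eqref{2d-1-a5} the density is the function $\rho=\rho(|\nabla\vphi+\nabla^\perp\psi|^2,S,\msB)$; this equation is elliptic in $\vphi$ as long as the subsonic bound \eqref{2-a7} persists, which it does under small perturbations. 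Since $\Curl(\nabla^\perp\psi)=-\Delta\psi$ and the background is irrotational, taking the curl of the momentum equations gives $-\Delta\psi=\Curl\bu$, and a Crocco-type relation, together with the facts that $S$ and $\msB-\Phi$ are constant along streamlines (the latter being, modulo the continuity equation, the fourth equation of \eqref{2-a2}), expresses $\Curl\bu$ as a nonlinear lower-order function of $\rho$, $S$, $\msB-\Phi$ and the stream function $\Psi$ normalized by $\rho\bu=\nabla^\perp\Psi$. The entropy and pseudo-Bernoulli balances become $\bu\cdot\nabla S=0$ and $\bu\cdot\nabla(\msB-\Phi)=0$, and $\Delta\Phi=\rho-b$ is unchanged. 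The boundary conditions translate as follows: after the harmless normalization $\Phi(\bar\rx)=0$, \eqref{2-b9-1} and \eqref{2-b9-4} give a mixed Dirichlet--Neumann problem for $\Phi$; the slip conditions in \eqref{2-b9} and \eqref{2-b9-4} become, after a gauge choice for $\psi$ (say $\psi=0$ on $\Gamw$), a homogeneous problem for $\psi$ together with Neumann-type conditions for $\vphi$ on $\Gamw\cup\Gamen$; the exit pressure \eqref{2-b9-3}, through \eqref{pentropy} and \eqref{2d-1-a5}, prescribes $|\nabla\vphi+\nabla^\perp\psi|^2$ on $\Gamex$ in terms of $S$, $\msB-\Phi$ and $\Phi_{bd}$, i.e. a nonlinear condition for $\vphi$; and $S$, $\msB-\Phi$ receive the inlet data $\Sen$, $\msB_{en}-\Phi_{bd}$ on $\Gamen$. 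Condition \eqref{compt} is exactly what makes these boundary data compatible at the corners $\ol{\Gamw}\cap(\ol{\Gamen}\cup\ol{\Gamex})$.

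For existence, part (a), I would solve the reformulated problem by a fixed point argument on a small closed convex subset of the space occurring in \eqref{theorem1-est2}, centered at the background solution. Given $(\vphi,\psi,\Phi)$ in this subset, with velocity $\bu$ and stream function $\Psi$, first solve the transport equations: $S$ and $\msB-\Phi$ are simply the inlet data composed with the flow map carrying $\rx\in\N$ to the foot on $\Gamen$ of the streamline of $\Psi$ through $\rx$; because $\bar u>0$, this map is well defined, stays transversal to $\Gamen$, and inherits the regularity of $\bu$, so $(S,\msB-\Phi)$ — and with them the derived quantities $\Curl\bu$ and $-\nabla\rho\cdot\nabla^\perp\psi$ — return to the appropriate weighted H\"older class. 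Then solve the elliptic part: the Poisson equation for $\psi$, and the coupled second-order elliptic system for $(\vphi,\Phi)$ (the divergence-form equation for $\vphi$ and the Poisson equation for $\Phi$) whose linearization has the structure isolated in \cite{BDX}; the a priori $H^1$ estimate from \cite{BDX}, upgraded by Schauder estimates in the Lipschitz nozzle $\N$ in the weighted spaces $C^{1,\alp}_{(-\alp,\Gamw)}(\N)$ and $C^{2,\alp}_{(-1-\alp,\Gamw)}(\N)$ (the weights absorbing the corner singularities and the non-regularizing transport terms), yields the bound \eqref{theorem1-est2} for the new iterate. For $\sigma_1$ small this map sends the subset continuously into itself and is compact, so Schauder's fixed point theorem produces a solution of the reformulated problem; undoing $\bu=\nabla\vphi+\nabla^\perp\psi$ and reading $\rho,p$ off the algebraic relations recovers a solution of \eqref{2-a2}, \eqref{2-b9}--\eqref{2-b9-3} satisfying \eqref{theorem1-est2}.

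For uniqueness, part (b), I would take two solutions sharing the same data, subtract, and close an estimate on the difference. The difficulty — and the reason for the stronger hypotheses $\alp\in(\tfrac12,1)$, $\mu\in(2,\infty)$ and the smallness of $\om_4$ — is that the transport equations do not regularize: $S_1-S_2$ and $(\msB_1-\Phi_1)-(\msB_2-\Phi_2)$ are controlled only through the difference of the two flow maps, and this difference is Lipschitz in the difference of the velocities only when the velocities are bounded in $C^{1,\alp}$ with $\alp>\tfrac12$; one further needs the inlet data $\Sen$, $\msB_{en}-\Phi_{bd}$ in $W^{2,\mu}(\Gamen)$ with $\mu>2$, so that their compositions with the (Lipschitz-perturbed) flow maps lie in $W^{1,\mu}\hookrightarrow C^{0,1-2/\mu}$, matching the velocity regularity in the difference estimate. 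With such flow-map estimates, the difference of the transported quantities is dominated by the difference of the velocities; the difference of $(\vphi,\psi,\Phi)$ is then dominated, through the linear estimates of \cite{BDX}, by the difference of the transported quantities; and for $\sigma_2$ small this loop is a strict contraction, forcing the difference to vanish.

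The step I expect to be the main obstacle is precisely this flow-map analysis, which underlies both the closure of the iteration and, acutely, the contraction in part (b): one must control how the streamlines of $\Psi$ — hence the flow map and the compositions $\Sen\circ(\text{flow map})$ and $(\msB_{en}-\Phi_{bd})\circ(\text{flow map})$ — respond to perturbations of $\bu$, uniformly up to $\Gamw$ and through the corners where only the weighted norms are finite, and must do so consistently with the nonlinear exit condition for $\vphi$. Once the good structure of \cite{BDX} is granted, the Helmholtz bookkeeping, the gauge choice for $\psi$, and the attendant elliptic estimates are comparatively routine.
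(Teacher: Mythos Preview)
Your plan is essentially the paper's: reduce Theorem~\ref{nf-theorem1} to the reformulated problem (Theorem~\ref{nf-theorem3}) via the Helmholtz decomposition, solve that by a Schauder fixed-point argument combining elliptic estimates (from \cite{BDX}) with a stream-function treatment of the transport, and prove uniqueness by a contraction in a weaker norm governed by flow-map bounds. You also correctly flag the flow-map analysis as the hard part.

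There is one organizational point where your scheme and the paper's genuinely differ, and it matters. You propose to iterate on $(\vphi,\psi,\Phi)$: given these, transport $(S,\msK)$ along ``the streamline of $\Psi$'' with $\rho\bu=\nabla^\perp\Psi$, then solve the elliptic block. The paper instead iterates on $\mclW=(S,\msK)$: with $\mclW^*$ frozen it first solves the \emph{full nonlinear} elliptic system \eqref{2d-4-a2}--\eqref{2d-4-a5} for $(\Psi,\phi,\psi)$ (Proposition~\ref{proposition-1}, itself an inner contraction), and only then transports. The reason for this order is that after the elliptic solve the continuity equation \eqref{nf-2-b6} holds \emph{exactly}, so the momentum field $V$ of \eqref{v} is divergence-free and the stream function $w$ in Lemma~\ref{nf-lemma-4-1} actually exists. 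In your scheme, a generic iterate $(\vphi,\psi,\Phi)$ does not satisfy continuity, so $\rho\bu$ is not divergence-free and there is no $\Psi$ with $\rho\bu=\nabla^\perp\Psi$; you would have to transport by raw characteristics instead, losing the clean representation \eqref{2d-3-c6}--\eqref{gradL} and the difference identity \eqref{2d-4-f6} that drive both the $C^{1,\alpha}$ closure for $\mclW$ and the uniqueness estimate. Either reorder the iteration (outer on $(S,\msK)$, inner contraction on $(\Psi,\phi,\psi)$) or redo the flow-map estimates without a stream function.

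A small correction on the role of $\alpha>\tfrac12$: in the paper it is not a Lipschitz-dependence issue for the flow map, but the condition $\tfrac{1}{1-\alpha}>2$ needed so that one can choose $\mu_1\in(2,\min\{\mu,\tfrac{1}{1-\alpha}\})$ and bound $\|\partial_2\nabla\mclU^{(1)}(0,\cdot)\|_{L^{\mu_1}}$ from the weighted control $|D^2\mclU|\lesssim |x_2(1-x_2)|^{-(1-\alpha)}$; this is what makes the $L^{\mu_1}$ estimate \eqref{2d-4-W} for $\partial_2(\mclW^{(1)}-\mclW^{(2)})$ close.
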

\begin{remark}
As same as that in \cite{BDX}, we can also prove the stability of subsonic flows under small perturbations of the nozzle boundary.
\end{remark}

\section{Reformulation of the Problem}
\label{sectionreformulation}
\subsection{Reformulation for the Euler-Poisson equations and proof of Theorem \ref{nf-theorem1}}
Suppose that $\rho>0$ and $u>0$.
It is easy to see that
 $(\rho, \bu, p, \Phi)\in (C^1(\N))^4\times C^2(\N)$ solve \eqref{2-a2} if and only if they satisfy
\begin{align}
\label{2d-2-a1}
&(\rho u)_{x_1}+(\rho v)_{x_2}=0,
\end{align}
\begin{align}
\label{2d-2-a2}
&(\rho uv)_{x_1}+(\rho v^2+p)_{x_2}=\rho\Phi_{x_2},\\
\label{2d-2-a3}
&\rho{\bf u}\cdot\nabla S=0,\\
\label{2d-2-a4}
&\rho{\bf u}\cdot\nabla\msK=0,\\
\label{2d-2-a5}
&\Delta\Phi=\rho-b(\rx),
\end{align}
where $\msK$ is defined by
\begin{equation}
\label{2d-2-a7}
\msK=\msB-\Phi.
\end{equation}
 We call $\msK$ \emph{pseudo-Bernoulli's invariant} in the sense that $\msK$ is a constant along each integral curve of ${\bf u}$.

We apply the Helmholtz decomposition to rewrite the velocity field ${\bf u}$ as a summation of a gradient and a divergence free field.
Given velocity field $\bu=(u, v)$, denote $\Curl \bu = \partial_{x_2}u-\partial_{x_1}v$. Then one can find a function $\psi$ satisfying
\begin{equation}
\label{ompsi}
\begin{cases}
\Delta \psi = \Curl\;{\bf u}\;\;\tx{in}\quad\N\\
\der_{x_1}\psi=0\;\;\tx{on}\;\;\Gamen\cup \Gamex,\quad \psi=0\;\; \text{on}\;\; \Gamw.
\end{cases}
\end{equation}
Then, it is easy to see that
$$
\Curl (\bu -\nabla^\perp \psi)=0
$$
where $\nabla^\perp\psi =(\partial_{x_2}\psi, -\partial_{x_1}\psi)$. Therefore, there exists a function $\varphi$ satisfying
$\nabla \varphi=\bu -\nabla^\perp \psi$.
Hence the velocity field $\bu$ can be expressed as
\begin{equation}
\label{2d-2-a6}
\bu=\nabla\vphi+\nabla^{\perp}\psi.
\end{equation}
One may say that the velocity field $\bu$ has a decomposition of compressible irrotational part $\nabla \varphi$ and incompressible vortical part $\nabla^\perp \psi$.

It follows from \eqref{2d-1-a5}, \eqref{2d-2-a7}, and \eqref{2d-2-a6} that the density $\rho$ can be written as
\begin{equation}
\label{rhotil}
\rho=H(S, \msK+\Phi-\frac 12|\nabla\vphi+\nabla^{\perp}\psi|^2)
\end{equation}
where the function $H(S,\zeta)$ is defined by
\begin{equation}
\label{2d-2-a8}
H(S,\zeta)= \left[\frac{(\gam-1)\zeta}{\gam\mfp \exp(\frac{S}{c_v})}\right]^{\frac{1}{\gam-1}}.
\end{equation}
The straightforward computations for \eqref{2d-2-a1} and \eqref{2d-2-a2} give
\begin{equation}
\label{nf-2-b1}
u(v_{x_1}-u_{x_2})=T(\rho,S) S_{x_2} -\msK_{x_2}
\end{equation}
where $T$ is the temperature defined by
\begin{equation*}\label{2d-2-a9}
T(\rho, S)= \frac{c_v \mfp}{\gam-1} \exp(\frac{S}{c_v}) \rho^{\gam-1}.
\end{equation*}

The system \eqref{2d-2-a1}--\eqref{2d-2-a5} can be written as the following nonlinear system for $(\vphi,\psi, \Phi, S, \msK)$:
\begin{align}
\label{nf-2-b6}
&\Div\bigl(H(S, \msK+\Phi-\frac{1}{2}|\nabla\vphi+\nabla^{\perp}\psi|^2)
(\nabla\vphi+\nabla^{\perp}\psi)\bigr)=0,\\
\label{nf-2-b9}
&\Delta \Phi=H(S, \msK+\Phi-\frac{1}{2}|\nabla\vphi+\nabla^{\perp}\psi|^2) -b,\\
\label{nf-2-b7}
&\Delta \psi=-\frac{T(\rho, S)\der_{x_2} S-\der_{x_2}\msK}{\partial_{x_1}\vphi+\partial_{x_2}\psi},\\
\label{eqSK}
&\rho(\nabla\vphi+\nabla^{\perp}\psi)\cdot \nabla S=
\rho(\nabla\vphi+\nabla^{\perp}\psi)\cdot \nabla \msK=0,
\end{align}
where $\rho=H(S, \msK+\Phi-\frac{1}{2}|\nabla\vphi+\nabla^{\perp}\psi|^2)$.


If ${\bf u}$ is given by \eqref{2d-2-a6} with $\psi$ satisfying \eqref{ompsi}, then ${\bf u}$ satisfy \eqref{2-b9} and \eqref{2-b9-4} if $\vphi$ satisfies
\begin{equation}\label{nf-3-a5}
\vphi=0\;\;\tx{on}\;\;\Gam_0\quad\tx{and}\quad \der_{{\bf n}_w}\vphi=0 \;\;\tx{on}\;\; \Gam_w.
\end{equation}
It follows from \eqref{pentropy} and \eqref{rhotil} that the boundary condition \eqref{2-b9-3} can be expressed by$(\vphi,\psi,\Phi, S,\msK)$ as follows:
\begin{equation}
\label{bc-vphi-ex}
\mfp \exp(\frac{S}{c_v}) H^\gam(S, \msK+\Phi -\frac{1}{2} |\nabla\vphi+\nabla^{\perp}\psi|^2)=\pex
\;\;\tx{on}\;\;\Gamex.
\end{equation}

Note that in Problem \ref{problem1}, the electric potential $\Phi$ at $\bar \rx$ has a freedom. Without loss of generality, we choose $\Phi(\bar\rx)=0$. From now on, denote $\mfU = (\vphi,\psi,\Phi,S,\msK)$, $\msK_{en}=\msB_{en}-\Phi_{bd}$, and $\mfU_0 = (\vphi_0, 0 ,\Phi_0,S_0,\msK_0)$ with $\msK_0=\msB_0$.

We have seen that if $\mfU \in (C^2(\N))^3\times (C^1(\N))^2$ satisfy \eqref{nf-2-b6}--\eqref{eqSK} and the boundary conditions \eqref{nf-3-a5}, \eqref{bc-vphi-ex}, and
\begin{align}
\label{s-bc-Phi}
&\Phi=\Phi_{bd}\,\, \tx{on}\;\;\Gamen\cup\Gamex,\quad  \der_{{\bf n}_w}\Phi=0\,\,\tx{on}\;\;\Gam_w,\\
\label{s-bc-psi}
& \partial_{x_1}\psi=0 \,\,\text{on}\,\,\Gamen\cup \Gamex\,\,\quad \text{and}\quad \,\, \psi=0\,\,\text{on}\,\,\Gamw,\\
\label{bcSK}
&S=S_{en}\,\,\text{and}\,\, \msK=\msK_{en} \,\, \text{on}\,\, \Gam_0,
\end{align}
then $(\rho, {\bf u}, p,\Phi)$ given by \eqref{rhotil}, \eqref{2d-2-a6} and \eqref{pentropy} solve \eqref{2-a2} in $\N$ with \eqref{2-b9}--\eqref{2-b9-3} provided that $\rho>0$ and $u>0$. Note that $\mfU_0$ solves \eqref{nf-2-b6}--\eqref{eqSK} with \eqref{nf-3-a5}--\eqref{bcSK} with $(\pex,\Phi_{bd},\Sen, \msK_{en})=(\bar p(L,x_2),$ $\Phi_0(\rx),S_0,\msK_0)$.

The straightforward computations show that the equation \eqref{nf-2-b6} is an elliptic equation for subsonic flows when $|\nabla\varphi+\nabla^\perp\psi|^2<c^2(\rho, S)$ where $c(\rho, S)=\sqrt{p_\rho(\rho, S)}$ is the local sound speed, and a hyperbolic equation for supersonic flows.

We have the following theorem for the problem \eqref{nf-2-b6}--\eqref{eqSK} with boundary conditions \eqref{nf-3-a5}--\eqref{bcSK}.
\begin{theorem}
\label{nf-theorem3}
 Suppose that  $(\bar{\rho}, \bar{\bf u}, \bar p, \Phi_0)$ is the subsonic background solution associated with the parameters  $b_0>0$, $S_0>0$, $J_0>0$, $\rho_0>\rhos$, $E_0$ and $L$.
Assume that $\Phi_{bd}$ satisfies the compatibility condition \eqref{compt}.
\begin{itemize}
\item[(a)](Existence) There exists a $\sigma_3>0$ depending on data  and $\alp$ so that if
    \begin{equation}
   \label{theorem3-est1}
   \om_1(b)+\om_2(\Sen, \msB_{en})+\om_3(\Phi_{bd},\pex)\le \sigma_3,
    \end{equation}
then the boundary value problem \eqref{nf-2-b6}--\eqref{eqSK} with \eqref{nf-3-a5}--\eqref{bcSK} has a solution $\mfU$ satisfying
\begin{equation}
\label{theorem3-est2}
\begin{split}
&\|(\vphi,\psi,\Phi)-(\vphi_0,0,\Phi_0)\|_{2,\alp,\N}^{(-1-\alp,\corners)}
+\|(S,\msK)-(S_0,\msK_0)\|_{1,\alp,\N}\\
&\le C\left(\om_1(b)+\om_2(\Sen, \msB_{en})+\om_3(\Phi_{bd},\pex)\right)
\end{split}
\end{equation}
where the constant $C$ depends only on the data and $\alp$.

\item[(b)](Uniqueness) Fix $\alp\in(\frac 12, 1)$ and  $\mu \in(2,\infty)$. Then there exists a $\sigma_4>0$ depending on data, $\alp$, and $\mu$ so that if
    \begin{equation}
       \label{theorem3-est3}
      \om_1(b)+\om_2(\Sen, \msB_{en})+\om_3(\Phi_{bd},\pex)+\om_4(\Sen, \msB_{en}, \Phi_{bd})\le \sigma_4,
    \end{equation}
then the solution $\mfU$  in (a) is unique.
\end{itemize}
\end{theorem}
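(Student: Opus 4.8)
The plan is to solve the reformulated system \eqref{nf-2-b6}--\eqref{eqSK} with boundary conditions \eqref{nf-3-a5}--\eqref{bcSK} by a two-layer iteration that decouples the elliptic part $(\vphi,\psi,\Phi)$ from the transport part $(S,\msK)$. I would first set up the iteration space $\mcA$ consisting of tuples $\mfU = (\vphi,\psi,\Phi,S,\msK)$ with
$$
\|(\vphi,\psi,\Phi)-(\vphi_0,0,\Phi_0)\|_{2,\alp,\N}^{(-1-\alp,\corners)}+\|(S,\msK)-(S_0,\msK_0)\|_{1,\alp,\N}\le M\sigma_3
$$
for a large constant $M$ depending only on the data. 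Given a tuple $\mfU^\ast = (\vphi^\ast,\psi^\ast,\Phi^\ast,S^\ast,\msK^\ast)\in\mcA$, I would first freeze $(S^\ast,\msK^\ast)$ in the second-order equations and solve the elliptic problem for $(\vphi,\psi,\Phi)$: the equations \eqref{nf-2-b6}, \eqref{nf-2-b9} for $(\vphi,\Phi)$ decouple weakly from \eqref{nf-2-b7} for $\psi$, and the nonlinear exit boundary condition \eqref{bc-vphi-ex} is treated by linearizing around $\bar p$. Here is precisely where I invoke the a priori $H^1$ estimate and the Schauder-type weighted estimates from \cite{BDX} for the linearized elliptic system on the Lipschitz domain $\N$: this gives a unique solution $(\vphi,\psi,\Phi)$ with the bound in \eqref{theorem3-est2}, provided $\sigma_3$ is small enough that the frozen coefficients stay in the subsonic regime \eqref{2-a7} and that the nonlinear corrections are absorbed by a contraction argument in the inner loop. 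The ellipticity of \eqref{nf-2-b6} on the subsonic set, noted right after \eqref{bcSK}, is what makes this step legitimate.

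Next, with the newly obtained velocity field $\bu = \nabla\vphi+\nabla^\perp\psi$ in hand, I would solve the transport equations \eqref{eqSK} for $(S,\msK)$ along the integral curves of $\bu$. Since $u>0$ on $\ol{\N}$ (which holds because the perturbation is small and $\bar u\ge \bar\rho_\sharp^{-1}J_0>0$), the flow generated by $\bu$ sweeps the nozzle from $\Gamen$ to $\Gamex$, so each streamline originates at the entrance where $S=S_{en}$, $\msK=\msK_{en}$ are prescribed by \eqref{bcSK}. Introducing the stream function (the Lagrangian variable) $\til\psi$ with $\nabla^\perp\til\psi = \rho\bu$ — equivalently, parametrizing streamlines by the flow map $\bx\mapsto \mfg(\bx)$ tracing back to the entrance point — I would represent $S(\bx) = S_{en}(\mfg(\bx))$ and $\msK(\bx) = \msK_{en}(\mfg(\bx))$. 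The regularity of the flow map in weighted Hölder spaces then yields $\|(S,\msK)-(S_0,\msK_0)\|_{1,\alp,\N}\le C(\om_2+\om_3)$, closing the $\mcA$-estimate. Composing the two maps defines the iteration operator $\mfT:\mcA\to\mcA$; its fixed point solves the full system, and transcribing back via \eqref{rhotil}, \eqref{2d-2-a6}, \eqref{pentropy} gives the claimed solution to \eqref{2-a2}. Compatibility condition \eqref{compt} is used to ensure $\der_{x_2}\Phi_{bd}=0$ at the corners so that the corner behavior of $\Phi$ and $\vphi$ is consistent with the mixed boundary conditions, guaranteeing the $C^{2,\alp}_{(-1-\alp,\corners)}$ regularity.

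For part (b), the uniqueness, the difficulty is that the transport equations do not gain regularity, so a straightforward contraction in the $C^{1,\alp}$ topology of $(S,\msK)$ is unavailable. Instead, I would take two solutions $\mfU^{(1)},\mfU^{(2)}$ with the bounds from (a) and estimate their difference. For the elliptic part, the difference of the velocity-potential pair satisfies a linear elliptic system whose right-hand side involves the differences $S^{(1)}-S^{(2)}$, $\msK^{(1)}-\msK^{(2)}$, so the $H^1$/weighted estimates of \cite{BDX} control $\|(\vphi,\psi,\Phi)^{(1)}-(\vphi,\psi,\Phi)^{(2)}\|$ by $\|(S,\msK)^{(1)}-(S,\msK)^{(2)}\|$ in a lower-regularity norm. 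For the transport part, I would compare the two flow maps $\mfg^{(1)},\mfg^{(2)}$: the bound on the difference of flow maps in terms of the difference of the driving velocity fields — this is the "delicate estimates for the flow map" advertised in the abstract — requires $\alp>\frac12$ (so that $W^{1,\mu}$ with $\mu>2$ embeds into $C^{0,\alp}$ and the velocity field is Lipschitz-like along streamlines) and the extra hypothesis $\om_4$, which gives $W^{2,\mu}$ control on $(\Sen,\msK_{en})$ so that $S^{(1)}-S^{(2)} = S_{en}\circ\mfg^{(1)} - S_{en}\circ\mfg^{(2)}$ can be estimated by $\|\nabla S_{en}\|_{L^\infty}\|\mfg^{(1)}-\mfg^{(2)}\|$. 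Combining the two estimates yields $\|\mfU^{(1)}-\mfU^{(2)}\|_\ast \le C\sigma_4\|\mfU^{(1)}-\mfU^{(2)}\|_\ast$ in the appropriate lower-order norm $\|\cdot\|_\ast$, so for $\sigma_4$ small the difference vanishes. I expect \textbf{the flow-map difference estimate} to be the main obstacle: one must track how the endpoint on $\Gamen$ of a streamline through a given interior point depends on perturbations of $\bu$, uniformly up to the corners $\corners$ where the weighted norms degenerate, and this is where the restrictions $\alp\in(\tfrac12,1)$ and $\mu\in(2,\infty)$ become essential.
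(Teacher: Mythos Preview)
Your proposal is essentially correct and matches the paper's two-layer scheme: freeze $(S,\msK)$, solve the elliptic system for $(\vphi,\psi,\Phi)$ via the \cite{BDX}-type estimates (this is Proposition~\ref{proposition-1}), then update $(S,\msK)$ by pulling back the entrance data along the stream function/flow map (this is Lemma~\ref{nf-lemma-4-1}). The uniqueness argument you sketch---comparing flow maps in a lower-order norm with the $W^{2,\mu}$ control on the entrance data---is also what the paper does, with the explicit formula \eqref{2d-4-f6} for $\mathscr{L}^{(1)}-\mathscr{L}^{(2)}$ playing the role of your ``flow-map difference estimate.''

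There is one genuine point you leave unspecified: how the outer fixed point is obtained. You write that $\mfT:\mcA\to\mcA$ and that ``its fixed point solves the full system,'' but self-mapping alone does not produce a fixed point, and you yourself note in part~(b) that contraction in the $C^{1,\alp}$ topology of $(S,\msK)$ is unavailable. The paper resolves this by applying the \emph{Schauder} fixed point theorem to the outer map $\mcl{J}$ acting only on $\mclW=(S,\msK)$: the iteration set $\mcl{P}(M)$ is convex and compact in the weaker space $C^{1,\alp/2}(\ol{\N})$, and $\mcl{J}$ is shown to be continuous in that topology by a subsequence/uniqueness-of-limit argument using the representation \eqref{2d-3-c6}--\eqref{gradL}. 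Without this compactness step your existence argument is incomplete; you should either insert it or explain an alternative (e.g.\ contraction in a still weaker norm), but the latter would essentially duplicate the uniqueness analysis you already reserve for part~(b).
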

In the following we show that Theorem \ref{nf-theorem1} is a consequence of Theorem \ref{nf-theorem3}.
\begin{proof}
[Proof of Theorem \ref{nf-theorem1}.]
Given  $(b,\Sen,\msB_{en},\Phi_{bd}, \pex)$ satisfying \eqref{theorem3-est1}, let $\mfU$ be a solution to the boundary value problem \eqref{nf-2-b6}--\eqref{eqSK} with \eqref{nf-3-a5}--\eqref{bcSK}. Let ${\bf u}$,  $\rho$,  and $p$ be given by \eqref{2d-2-a6}, \eqref{rhotil}, and \eqref{pentropy}, respectively.
Then \eqref{theorem3-est2} yields
\begin{equation}
\label{2d-2-b1}
\|u-\bar u\|_{0,\N}+\|\rho-\bar{\rho}\|_{0,\N}\le C( \om_1(b)+\om_2(\Sen, \msB_{en})+\om_3(\Phi_{bd},\pex))
\end{equation}
for a constant $C$ depending on the data. Choose a constant $\sigma_1\in(0,\sigma_3]$ so that if $ \om_1(b)+\om_2(\Sen, \msB_{en})+\om_3(\Phi_{bd},\pex)\le \sigma_1$ then \eqref{2d-2-b1} implies that $u>0$ and $\rho>0$ in $\ol{\N}$. Furthermore, $(\rho,{\bf u},p,\Phi)$ satisfy  \eqref{theorem1-est2} and solve the boundary value problem \eqref{2-a2} with \eqref{2-b9}--\eqref{2-b9-3}. This proves (a) of Theorem \ref{nf-theorem1}.

Next, we prove (b) of Theorem \ref{nf-theorem1}. Suppose that $(b,\Sen,\msB_{en},\Phi_{bd}, \pex)$ satisfies \eqref{theorem3-est3}. Let $(\rho^{(j)},\bu^{(j)}, p^{(j)}, \Phi^{(j)})$ ($j=1$, $2$)  be two solutions to \eqref{2-a2} with \eqref{2-b9}--\eqref{2-b9-3}.
Since ${\bf u}^{(j)}\in C_{(-\alpha, \Gamw)}^{1,\alp}(\N)$, the equation
\begin{equation}\label{psidiv}
\Delta\psi^{(j)}=\Div J{\bf u}^{(j)}
\quad \text{where}\quad J=\begin{pmatrix}
0&-1\\
1&\phantom{-}0
\end{pmatrix}
\end{equation}
with boundary conditions
\begin{equation}\label{2d-2-b2}
\der_{x_1}\psi^{(j)}=0\;\;\tx{on}\;\;\Gamen\cup \Gamex,\quad \psi^{(j)}=0\;\;\tx{on}\;\;\Gamw
\end{equation}
has a unique weak solution $\psi^{(j)}\in H^1(\N)$. Adjusting the proof of \cite[Theorem 3.8]{Ha-L} gives
$$\|\psi^{(j)}\|_{\alp,\N}\le C_1\|{\bf u}\|_{0,\N}.$$
Since $\N$ is a rectangle in $\R^2$, one has
$$\|\psi^{(j)}\|_{1,\alp,\N}\le C_2\|{\bf u}^{(j)}\|_{\alp,\N}$$
by  the method of reflection and uniqueness of weak solution to \eqref{psidiv}-\eqref{2d-2-b2}.
Finally, the Schauder estimate with scaling yields
$$\|\psi^{(j)}\|_{2,\alp,\N}^{(-1-\alp,\corners)}\le C_3\|{\bf u}^{(j)}\|_{1,\alp,\N}^{(-\alp,\corners)}.$$
Here, the estimate constants $C_{k}$ for $k=1,2,3$ depend only on $L$ and $\alp$. For more details, one can refer to \cite{BDX, BDX2, GilbargTrudinger, Ha-L} and references therein.

For each $j=1,2$, define a function $\vphi^{(j)}$ by
\begin{equation}
\label{2d-2-b3}
\vphi^{(j)}(x_1,x_2)=\int_0^{x_1} (u^{(j)}-\psi_{x_2}^{(j)})(t,x_2)\;dt\quad\tx{in}\quad\N.
\end{equation}
It follows from \eqref{2-b9-1}, \eqref{2d-2-b2}, and \eqref{2d-2-b3} that
\begin{equation*}
\nabla\vphi^{(j)}={\bf u}^{(j)}-\nabla^{\perp}\psi^{(j)}\quad\tx{in}\quad\N.
\end{equation*}
Define
\begin{equation*}
S^{(j)}=c_v \ln\left(\frac{ p^{(j)}}{\mfp \left(\rho^{(j)}\right)^{\gam}}\right),\quad \msK^{(j)}=\msB^{(j)}-\Phi^{(j)}
\quad\tx{in}\quad \N\,\, \text{for}\,\, j=1\,\,\text{and}\,\, 2.
\end{equation*}

Choose $\sigma_2\in(0,\sigma_1]$ small so that \eqref{theorem1-est2} implies $\rho^{(j)}>0$ and $u^{(j)}>0$ in $\ol{\N}$ for $j=1,2$. Then $\mfU^{(j)}= (\vphi^{(j)}, \psi^{(j)}, \Phi^{(j)}, S^{(j)}, \msK^{(j)})$ solves \eqref{nf-2-b6}--\eqref{eqSK} with \eqref{nf-3-a5}--\eqref{bcSK}. It is easy to check that each $\mfU^{(j)}$ satisfies the estimate \eqref{theorem3-est2}. Then one can reduce $\sigma_2$ on $(0,\min(\sigma_1,\sigma_4)]$ so that if \eqref{theorem1-est3} holds, then (b) of Theorem \ref{nf-theorem3} yields $\mfU^{(1)}=\mfU^{(2)}$. Hence $(\rho^{(1)}, \bu^{(1)}, p^{(1)}, \Phi^{(1)}) = (\rho^{(2)}, \bu^{(2)}, p^{(2)}, \Phi^{(2)})$. This finishes the proof for
(b) of Theorem \ref{nf-theorem1}.
\end{proof}

The rest of the paper is devoted to prove Theorem \ref{nf-theorem3}.

\subsection{Framework for proof of Theorem \ref{nf-theorem3}}\label{sectionlinear}
In order to prove Theorem \ref{nf-theorem3}, we use the method of iteration. For that purpose, we linearize \eqref{nf-2-b6}-\eqref{nf-2-b9} and the boundary condition \eqref{bc-vphi-ex}.

For ${\bf s}=(s_1,s_2)\in \R^2$, set ${\bf s}^{\perp}=(s_2,-s_1)$. For $(\varsigma, \eta, z)\in \R^3$, ${\bf q}=(q_1,q_2)\in \R^2$, and ${\bf s}\in\R^2$, define ${\bf A}=(A_1,A_2)$ and $B$ by
\begin{equation}
\label{nf-3-a6}
\begin{split}
&{A}_j(\varsigma, \eta, z,{\bf q},{\bf s})=B(\varsigma, \eta, z,{\bf q},{\bf s})q_j\quad\tx{for}\quad j=1,2\\
&B(\varsigma, \eta, z,{\bf q},{\bf s})=H(\varsigma, \eta+z-\frac{1}{2}|{\bf q}+{\bf s}^{\perp}|^2).
\end{split}
\end{equation}
It follows from \eqref{2d-2-a8} that ${\bf A}(\varsigma, \eta, z, {\bf q},\s)$ and $B(\varsigma, \eta, z, {\bf q},\s)$ are infinitely differentiable with respect to $\varsigma$, $\eta$, $z$, $\q$ and $\s$ if $\eta+z-\frac{1}{2}|{\bf q}+\s^\perp|^2>0$.
In terms of ${\bf A}$ and $B$,  \eqref{nf-2-b6} and \eqref{nf-2-b9} can be written as
\begin{align}
\label{nf-3-a7}
&\Div({\bf A}(S,\msK,\Phi,\nabla\vphi,\nabla\psi))=-\Div(B(S, \msK, \Phi,\nabla\vphi,\nabla\psi)\nabla^{\perp}\psi),\\
\label{nf-3-a8}
&\Delta \Phi=B(S, \msK,\Phi,\nabla\vphi,\nabla\psi)-b.
\end{align}

Set
$$
(\Psi,\phi):=(\Phi-\Phi_0,\vphi-\vphi_0)
$$
where $(\Phi_0, \vphi_0)$ is given by \eqref{2-b6}.
From now on, we introduce the following notations $\mclU :=(\Psi, \phi, \psi)$ and $\mclW: =(S, \msK)$. Denote  $\mclW_0: =(S_0, \msK_0)$
and $\mclWen: =(S_{en}, \msK_{en})$. We may denote $\partial_{x_i}$ by $\partial_i$. It is easy to see that there exists a $\delta_0\in (0, 1)$ such that if
\begin{equation}\label{defdelta}
|\mclW-\mclW_0|+|\Psi|+|\nabla\phi|+|\nabla\psi| \leq \delta_0,
\end{equation}
then ${\bf A}$ and $B$ are smooth with respect to their variables $(\varsigma, \eta, z,{\bf q},{\bf s})$ and the system \eqref{nf-3-a7} and \eqref{nf-3-a8} is a uniformly elliptic system.

Denote $\mfV= (S_0, \msK_0, \Phi_0, \nabla\varphi_0, {\bf 0})$.  Let $(\mclU, \mclW) =(\Psi, \phi, \psi, S, \msK)  \in (C^2(\N))^3\times (C^1(\N))^2$ be a solution of \eqref{nf-2-b6}--\eqref{eqSK} and \eqref{nf-3-a5}--\eqref{bcSK}. Then $(\Psi, \phi)$ satisfy the equations
\begin{equation}
\label{nf-3-c1}
\left\{
\begin{aligned}
&L_1(\Psi, \phi) =\Div {\bf F}(\rx, \mclW-\mclW_0, \Psi,  \nabla \phi, \nabla \psi)\\
&L_2(\Psi, \phi) =f_1(\rx, \mclW-\mclW_0, \Psi, \nabla \phi, \nabla \psi)
\end{aligned}
\right.
\end{equation}
where $L_1, L_2, {\bf F}=(F_1,F_2)$ and $f_1$ are defined as follows:
\begin{equation}
\label{2d-3-a2}
\begin{split}
&L_1(\Psi,\phi)=\sum_{i,j=1}^2\partial_i\Bigl(\der_{q_j}A_i(\mfV_0)\der_j\phi+\Psi\der_zA_i(\mfV_0)\Bigr),\\
&L_2(\Psi,\phi)=\Delta \Psi-\Bigl(\Psi\der_zB(\mfV_0)+\nabla\phi\cdot \der_{{\bf q}}B(\mfV_0)\Bigr),
\end{split}
\end{equation}
and
\begin{equation}
\label{nf-F}
\begin{split}
F_i(\rx,{Q})=&-\int_0^1D_{(\varsigma,\eta,{\bf s})}
A_i(\mfV_0+t{Q})\;dt\cdot({\varsigma}, {\eta},{\bf s})\\
&-\int_0^1D_{(z,{\bf q})}A_i(\mfV_0+t{Q})-D_{(z,{\bf q})}A_i(\mfV_0)\;dt
\cdot( z, {\bf q})-
B(\mfV_0+{Q})({\bf s}^{\perp})_i
\end{split}
\end{equation}
for $i=1$, $2$, and
\begin{equation}
\label{nf-f1}
\begin{split}
f_1(\rx,{Q})=&\int_0^1D_{(\varsigma,\eta,{\bf s})}
B(\mfV_0+t {Q})\;dt\cdot( {\varsigma},{\eta},{\bf s})-(b-b_0)\\
&+\int_0^1D_{(z,{\bf q})}B(\mfV_0+t {Q})-D_{(z,{\bf q})}B(\mfV_0)\;dt
\cdot(z, {\bf q})
\end{split}
\end{equation}
with
${Q}=({\varsigma},{\eta}, z, {\bf q}, {\bf{s}})\in\R^3\times(\R^2)^2$.

Note that $\mfU_0$ satisfies
\begin{equation*}
\mfp \exp(\frac{S_0}{c_v}) H^\gam(S_0, \msK_0+\Phi_0 -\frac{1}{2} |\nabla\vphi_0|^2)=\bar p(L,x_2)\;\;\tx{on}\;\;\Gamex.
\end{equation*}
Subtracting this from \eqref{bc-vphi-ex} yields
\begin{equation}
\label{3-d3}
\partial_1 \phi=g(\rx, \mclW-\mclW_0, \nabla\phi, \nabla\psi) \;\;\tx{on}\;\;\Gamex
\end{equation}
for $g$ defined by
\begin{equation}
\label{defg}
\begin{aligned}
g(\rx,{\varsigma},{\eta},{\bf q}, {\bf s})
=&-s_2+  \frac{{\eta}+\Psi_{bd}- \frac 12|{\bf q}+{\bf s}^{\perp}|^2}{\bar u(L)}\\
& +\gam\frac{\pex^{\frac{\gamma-1}{\gam}}(\mfp \exp(\frac{S_0+\varsigma}{c_v}))^{\frac{1}{\gam}}- (\bar p (L))^{\frac{\gamma-1}{\gam}}( \mfp \exp(\frac{S_0}{c_v}))^{\frac{1}{\gam}}}{(\gam-1)\bar u(L)} .
\end{aligned}
\end{equation}

Since $\mclW_0$ is a constant vector, one has $\der_{2}\mclW=\der_2(\mclW-\mclW_0)$ so that \eqref{nf-2-b7} can be rewritten as
\begin{equation}
\label{nf-3-b2}
\Delta \psi=f_2(\rx,\mclW-\mclW_0,\Psi,\nabla \phi,\nabla\psi,\der_2(\mclW-\mclW_0))
\end{equation}
with $f_2$ defined by
\begin{equation}
\label{nf-3-b5}
f_2(\rx,{Q},{\xi},{\tau})
=-\frac{T(B(\mfV_0+{Q}),S_0+{\varsigma}) {\xi}-{\tau}}
{\der_1\vphi_0(\rx)+{q}_1+{s}_2}.
\end{equation}

We have shown that the boundary value problem \eqref{nf-2-b6}--\eqref{eqSK} with boundary condition \eqref{nf-3-a5}--\eqref{bcSK} is equivalent to \eqref{nf-3-c1}, \eqref{nf-3-b2} and \eqref{eqSK} in $\N$ with boundary conditions \eqref{s-bc-psi}, \eqref{bcSK}, \eqref{3-d3}, and
\begin{align}
\label{2d-3-b2}
\phi=0\,\,\tx{on}\;\;\Gamen,\quad \der_{{\bf n}_w}\phi=0\,\,\tx{on}\;\;\Gamw
\end{align}
and
\begin{align}\label{2d-3-bcPsi}
\Psi=\Psi_{bd}\,\,\tx{on}
\;\;\Gamen\cup\Gamex,\quad \text{and}\quad
\der_{{\bf n}_w}\Psi=0\,\,\tx{on}\;\;\Gamw,
\end{align}
where $\Psi_{bd}=\Phi_{bd}-\Phi_0$.
 To prove Theorem \ref{nf-theorem3}, it suffices to prove that the nonlinear system  \eqref{nf-3-c1}, \eqref{nf-3-b2} and \eqref{eqSK} in $\N$ with boundary conditions \eqref{s-bc-psi}, \eqref{bcSK}, \eqref{3-d3}, \eqref{2d-3-b2} and \eqref{2d-3-bcPsi} has unique solution $(\mclU, \mclW)$ when $\sigma_3$ and $\sigma_4$ are chosen sufficiently small.

To prove unique solvability of this nonlinear boundary value problem, we take the following steps:

Step 1. For a fixed constant $\alpha \in (0, 1)$, define
\begin{equation}
\label{2d-4-e1}
\mcl{P}(M)=\{\mclW \in [C^{1,\frac{\alp}{2}}(\ol{\N})]^2:
\|\mclW-\mclW_0\|_{1,\alp,\N}\le M \sigma\}
\end{equation}
where $\sigma =\om_1(b)+\om_2(\Sen,\msB_{en})+\om_3(\Phi_{bd},\pex)$ and  the constant $M>0$ is to be determined later such that $M\sigma\leq \delta_0/2$.
Fix $\mclW^*= (S^*,\msK^*)\in \mcl{P}(M)$, and solve a nonlinear problem
\begin{align}
\label{2d-4-a2}
&\begin{cases}
L_1(\Psi, \phi) =\Div {\bf F}(\rx, \mclW^*-\mclW_0, \Psi,  \nabla \phi, \nabla \psi)\\
L_2(\Psi, \phi) =f_1(\rx, \mclW^*-\mclW_0, \Psi, \nabla \phi, \nabla \psi)\\
\end{cases}\;\;\tx{in}\;\;\N
\end{align}
and
\begin{equation}\label{2d-4-a5}
\Delta\psi=f_2(\rx, \mclW^*-\mclW_0 ,\Psi,\nabla \phi,\nabla\psi,\der_2 \mclW^*)\,\,\tx{in}\;\;\N
\end{equation}
with boundary conditions \eqref{s-bc-psi}, \eqref{bcSK},  \eqref{2d-3-b2}, \eqref{2d-3-bcPsi}, and
\begin{equation}
\label{2d-4-a3}
\der_1\phi=g(\rx, \mclW^*-\mclW_0, \Psi, \nabla\phi, \nabla\psi)\,\,\tx{on}\;\;\Gamex.
\end{equation}

We have the following proposition for the boundary value problem \eqref{2d-4-a2}--\eqref{2d-4-a5} with boundary conditions \eqref{s-bc-psi}, \eqref{bcSK},  \eqref{2d-3-b2}, \eqref{2d-3-bcPsi}, and \eqref{2d-4-a3}.
\begin{proposition}
\label{proposition-1}
Suppose that $(\bar{\rho}, \bar{\bf u}, \bar p, \Phi_0)$ is the subsonic background solution associated with the parameters $b_0>0$, $S_0>0$, $J_0>0$, $\rho_0>\rhos$, $E_0$, and $L$. Assume that
 $\Phi_{bd}$ satisfies the compatibility condition \eqref{compt}.
Then, there exist a constant $\sigma_5>0$ depending on the data, $\alp$ and $M$ so that if
    \begin{equation}
   \label{2d-3-c1}
  \om_1(b)+\om_2(\Sen,\msB_{en})+\om_3(\Phi_{bd},\pex)\le \sigma_5,
\end{equation}
then the boundary value problem \eqref{2d-4-a2}--\eqref{2d-4-a5} with boundary conditions \eqref{s-bc-psi}, \eqref{bcSK},  \eqref{2d-3-b2}, \eqref{2d-3-bcPsi}, and \eqref{2d-4-a3} has a unique solution $\mclU= (\Psi,\phi, \psi)$ satisfying
\begin{equation}
\label{2d-3-c2}
\begin{split}
\|\mclU\|_{2,\alp,\N}^{(-1-\alp,\corners)}
\le C\left(\om_1(b)+\om_2(\Sen,\msB_{en})+\om_3(\Phi_{bd},\pex)\right)
\end{split}
\end{equation}
where the constant $C$ depends only on the data and $\alp$.
\end{proposition}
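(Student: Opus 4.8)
The plan is to solve the coupled boundary value problem \eqref{2d-4-a2}--\eqref{2d-4-a5} by a second, inner iteration that decouples the elliptic system for $(\Psi,\phi)$ from the Poisson equation for $\psi$. Since $\mclW^*\in\mcl P(M)$ is frozen, the right-hand sides ${\bf F}$, $f_1$, $f_2$ and the boundary nonlinearity $g$ all become functions of $\rx$, $\mclU$ and $\nabla\mclU$ with a fixed known perturbation datum. First I would set up a map $\mclT$ on a closed ball
\[
\mcl B(N):=\{\mclU=(\Psi,\phi,\psi)\in (C^{2,\alp}_{(-1-\alp,\corners)}(\N))^3:\ \|\mclU\|_{2,\alp,\N}^{(-1-\alp,\corners)}\le N\sigma\}
\]
with $N$ to be chosen and $N\sigma\le\delta_0/2$ so that \eqref{defdelta} holds and the system is uniformly elliptic. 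Given $\mclU^\dagger=(\Psi^\dagger,\phi^\dagger,\psi^\dagger)\in\mcl B(N)$, I would first solve the \emph{linear} elliptic system \eqref{2d-4-a2} for $(\Psi,\phi)$ with right-hand sides ${\bf F}(\rx,\mclW^*-\mclW_0,\Psi^\dagger,\nabla\phi^\dagger,\nabla\psi^\dagger)$, $f_1(\cdots)$ and boundary conditions \eqref{2d-3-b2}, \eqref{2d-3-bcPsi}, \eqref{2d-4-a3} with $g$ evaluated at $\mclU^\dagger$; then, with this new $(\Psi,\phi)$ in hand, solve the scalar Poisson equation \eqref{2d-4-a5} for $\psi$ with boundary condition \eqref{s-bc-psi}. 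Output $\mclU:=(\Psi,\phi,\psi)$.

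The key analytic input is the a priori $H^1$ and Schauder theory for the linearized elliptic system $L_1(\Psi,\phi)=\Div{\bf G}$, $L_2(\Psi,\phi)=h$ with the mixed Dirichlet--oblique/Neumann conditions on the Lipschitz rectangle $\N$; this is precisely the structure isolated in \cite{BDX} (and used in \cite{BDX2}), whose estimates I would invoke to get
\[
\|(\Psi,\phi)\|_{2,\alp,\N}^{(-1-\alp,\corners)}\le C\bigl(\|{\bf G}\|_{1,\alp,\N}^{(-\alp,\corners)}+\|h\|_{\alp,\N}^{(1-\alp,\corners)}+\|\Psi_{bd}\|_{2,\alp,\Gamen\cup\Gamex}^{(-1-\alp,\partial(\Gamen\cup\Gamex))}+\|g(\cdot,\mclU^\dagger)\|_{1,\alp,\Gamex}^{(-\alp,\der\Gamex)}\bigr),
\]
the constant depending only on the data and $\alp$; the compatibility condition \eqref{compt} on $\Phi_{bd}$ is what lets the estimate reach up to the corners $\corners$. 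For the $\psi$-equation one uses the standard Schauder estimate with scaling on the rectangle together with the method of reflection (exactly as in the proof of Theorem \ref{nf-theorem1} above), giving $\|\psi\|_{2,\alp,\N}^{(-1-\alp,\corners)}\le C\|f_2\|_{\alp,\N}^{(1-\alp,\corners)}$, and here one must check that $\der_1\vphi_0+q_1+s_2=\der_1\vphi_0+\der_1\phi+\der_2\psi=u$ stays bounded away from zero, which holds once $N\sigma$ is small because $\bar u\ge$ const $>0$.

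Next I would estimate the nonlinear terms. Using that $D_{(z,\q)}A_i$ is Lipschitz and $D_{(\varsigma,\eta,\s)}A_i$, $B$ are bounded near $\mfV_0$, the Taylor-with-integral-remainder forms \eqref{nf-F}, \eqref{nf-f1}, \eqref{nf-3-b5} and \eqref{defg} give pointwise bounds of the schematic type $|{\bf F}|\lesssim (|\mclW^*-\mclW_0|+|\Psi^\dagger|+|\nabla\psi^\dagger|)+(|\Psi^\dagger|+|\nabla\phi^\dagger|+|\nabla\psi^\dagger|)^2$ and similarly for $f_1,f_2,g$, with $b-b_0$ contributing linearly to $f_1$; interpolating in the weighted H\"older norms this yields
\[
\|{\bf F}\|_{1,\alp,\N}^{(-\alp,\corners)}+\|f_1\|_{\alp,\N}^{(1-\alp,\corners)}+\|f_2\|_{\alp,\N}^{(1-\alp,\corners)}+\|g(\cdot,\mclU^\dagger)\|_{1,\alp,\Gamex}^{(-\alp,\der\Gamex)}\le C\bigl(\sigma + M\sigma + (N\sigma)^2\bigr).
\]
Combining with the linear estimates, $\|\mclU\|_{2,\alp,\N}^{(-1-\alp,\corners)}\le C_*(1+M)\sigma + C_*(N\sigma)^2$. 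Choosing $N:=2C_*(1+M)$ and then $\sigma_5$ small enough that $C_*N^2\sigma_5\le N/2$ and $N\sigma_5\le\delta_0/2$ makes $\mclT$ map $\mcl B(N)$ into itself. For the contraction, I would take $\mclU^\dagger_1,\mclU^\dagger_2\in\mcl B(N)$, subtract the two linear problems, and note that the difference solves the same linear elliptic system and Poisson equation with right-hand sides equal to differences of the nonlinear terms; Lipschitz continuity of ${\bf F},f_1,f_2,g$ in $(\Psi,\nabla\phi,\nabla\psi)$ on the small ball gives $\|\mclT\mclU^\dagger_1-\mclT\mclU^\dagger_2\|\le C_*N\sigma\,\|\mclU^\dagger_1-\mclU^\dagger_2\|$, which is a genuine contraction after possibly shrinking $\sigma_5$. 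The fixed point is the desired $\mclU$, and the self-map bound is exactly \eqref{2d-3-c2}. Uniqueness of the solution to \eqref{2d-4-a2}--\eqref{2d-4-a5} in the class where $N\sigma\le\delta_0/2$ follows by the same difference argument applied to two solutions directly (no iteration), since the contraction factor $<1$ forces them to coincide.

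I expect the main obstacle to be the boundary terms at the corners $\corners=\Gamw\cap(\Gamen\cup\Gamex)$: keeping the oblique-derivative condition \eqref{2d-4-a3} (which involves the full nonlinearity $g$, including the quadratic term $\frac12|{\bf q}+{\bf s}^\perp|^2$ and the fractional power $\pex^{(\gamma-1)/\gamma}$) compatible with the weighted H\"older space up to the corners, and verifying that the mixed Dirichlet--Neumann data for $\Psi$ and $\phi$ satisfy the compatibility needed for the \cite{BDX}-type estimate to close, is the delicate point; everything else is a routine Banach fixed-point argument once those estimates are granted.
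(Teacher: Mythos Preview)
Your overall fixed-point strategy matches the paper's, but the order in which you solve the two subproblems matters, and the order you chose does not close. The point is that ${\bf F}$ and $g$ depend \emph{linearly} on $\nabla\psi$ with an $O(1)$ coefficient: the last term in \eqref{nf-F} is $-B(\mfV_0+Q)({\bf s}^\perp)_i$ with $B\approx\bar\rho$, and \eqref{defg} contains the explicit $-s_2$. Your own schematic bound $|{\bf F}|\lesssim(|\mclW^*-\mclW_0|+|\Psi^\dagger|+|\nabla\psi^\dagger|)+\cdots$ records this, but in the next display you silently drop the linear $|\nabla\psi^\dagger|$ contribution and write $C(\sigma+M\sigma+(N\sigma)^2)$; the correct bound carries an extra $CN\sigma$, and then the choice $N=2C_*(1+M)$ no longer makes $\mclT$ a self-map of $\mcl B(N)$. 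The contraction step fails for the same reason: the Lipschitz constant of ${\bf F}$ and $g$ in the $\nabla\psi$ slot is $O(1)$, not $O(N\sigma)$, so one only gets $\|\mclT\mclU^\dagger_1-\mclT\mclU^\dagger_2\|\le C\sigma\|\mclU^\dagger_1-\mclU^\dagger_2\|+C\|\psi^\dagger_1-\psi^\dagger_2\|_{1,\alp}$, which is not contractive.

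The paper's remedy is simply to reverse the order inside the iteration. Given $\til\mclU$, first solve the Poisson problem for $\psi$ with right-hand side $f_2(\rx,\mclW^*-\mclW_0,\til\Psi,\nabla\til\phi,\nabla\til\psi,\der_2\mclW^*)$. Because $f_2$ is proportional to $\der_2\mclW^*=O(M\sigma)$, this gives $\|\psi\|_{2,\alp,\N}^{(-1-\alp,\corners)}\le C(M+1)\sigma$ with a constant independent of the ball radius, and Lipschitz constant $O(M\sigma)$ in $\til\mclU$. Then solve the linear system for $(\Psi,\phi)$ using this \emph{new} $\psi$ (not $\til\psi$) in the ${\bf s}=\nabla\psi$ slot of ${\bf F},f_1,g$. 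Now the $O(1)$ dependence on $\nabla\psi$ is harmless: for the self-map $\|\nabla\psi\|\le C(M+1)\sigma$ directly, and for the contraction the $\psi$-difference already carries the small factor $O(M\sigma)$. With this ordering both estimates close exactly as you sketched; alternatively one could keep your order and argue that $\mclT^2$ is contractive, but the reordering is cleaner and is what the paper does.
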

The proof of Proposition \ref{proposition-1} is given in Section \ref{section-nlbvp}.

Step 2.
Let $\mclU = (\Psi,\phi, \psi)\in (C^1(\ol{\N})\cap C^2(\N))^3$ be unique solution of \eqref{2d-4-a2}--\eqref{2d-4-a5} with boundary conditions \eqref{s-bc-psi}, \eqref{bcSK},  \eqref{2d-3-b2}, \eqref{2d-3-bcPsi}, and \eqref{2d-4-a3} associated with $\mclW^* =(S^*, \msK^*)\in\mcl{P}(M)$. It follows from \eqref{s-bc-psi}, \eqref{2d-3-b2}, and \eqref{2d-4-a2} that the vector field
\begin{equation}
\label{v}
V=H(S^*, \msK^*+\Phi_0+ \Psi-\frac 12|\nabla\vphi_0+\nabla\phi+\nabla^{\perp}\psi|^2)
(\nabla\vphi_0+\nabla\phi+\nabla^{\perp}\psi)
\end{equation}
satisfies
\begin{equation}
\label{2d-3-b6}
\Div V=0\quad \tx{in}\quad \N,\quad V\cdot{\bf n}_w=0\quad\tx{on}\quad\Gamw.
\end{equation}
The following lemma guarantees that there exists a solutions $\mclW$
 of the  problem
\begin{equation}
\label{2d-3-b4}
\begin{split}
&V\cdot\nabla \mclW=0\quad\tx{in}\quad \N,\quad  \mclW=\mclWen\quad\tx{on}\quad \Gamen.
\end{split}
\end{equation}
\begin{lemma}
\label{nf-lemma-4-1}
Suppose that a vector field ${\bf V}=(V_1,V_2)$ satisfies \eqref{2d-3-b6} and the estimate
\begin{equation}
\label{2d-3-b7}
\|{\bf V}\|_{1,\alp,\N}^{(-\alp,\corners)}\le K_0
\end{equation}
for a constant $K_0>0$. In addition, assume that there exists a constant $\nu^*>0$ satisfying
\begin{equation}
\label{2d-3-b8}
V_1\ge \nu^* \quad\tx{in}\quad\ol{\N}.
\end{equation}
Then \eqref{2d-3-b4} has a unique solution $\mclW \in (C^{1, \alp}(\ol{\N}))^2$ satisfying
\begin{equation}
\label{nf-4-c3}
\begin{split}
\|\mclW-\mclW_0\|_{1, \alpha,\N}\le C^*\|\mclWen-\mclW_0\|_{1,\alp,\Gamen},
\end{split}
\end{equation}
where the constant $C^*$ depends only on $L$, $\nu^*$,  $K_0$ and  $\alp$.
\end{lemma}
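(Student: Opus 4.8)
The plan is to solve the transport system \eqref{2d-3-b4} by the method of characteristics, using the fact that $V_1 \ge \nu^* > 0$ guarantees that every integral curve of $V$ enters the nozzle through the entrance $\Gamen$ and exits through $\Gamex$, so the flow map is well-defined on all of $\ol{\N}$. First I would set up the flow: for each $y \in [0,1]$, let $x_2 = \mfh(x_1; y)$ solve the ODE
\begin{equation*}
\frac{d x_2}{d x_1} = \frac{V_2(x_1, x_2)}{V_1(x_1, x_2)}, \qquad x_2(0) = y.
\end{equation*}
Since $V/V_1 = (1, V_2/V_1)$ with $V_2/V_1 \in C^{1,\alp}(\ol{\N})$ and bounded by $K_0/\nu^*$ together with its derivatives (after also using $V_1\ge\nu^*$ in the quotient), Picard--Lindel\"of gives a unique solution on $[0,L]$, and the boundary condition $V\cdot{\bf n}_w = 0$ on $\Gamw$, i.e. $V_2 = 0$ on $\{x_2 = 0,1\}$, forces the strips $x_2 \equiv 0$ and $x_2 \equiv 1$ to be integral curves; by uniqueness of ODE solutions no other characteristic can cross them, so $\mfh(x_1; y) \in [0,1]$ for all $(x_1, y) \in [0,L]\times[0,1]$. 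One then checks that $(x_1, y)\mapsto (x_1, \mfh(x_1;y))$ is a bijection of $\ol\N$ onto itself whose inverse associates to $\rx = (x_1,x_2)$ the entrance height $\mfp(\rx) := \mfh^{-1}$, the unique $y$ with $\mfh(x_1;y) = x_2$.

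Next I would define the candidate solution by $\mclW(\rx) := \mclWen(0, \mfp(\rx))$ — more precisely $\mclW(x_1,x_2) = \mclWen$ evaluated at the point of $\Gamen$ lying on the characteristic through $\rx$ — and verify that it solves \eqref{2d-3-b4}: it is constant along characteristics by construction, hence $V\cdot\nabla\mclW = 0$, and it agrees with $\mclWen$ on $\Gamen$ since $\mfp(0,x_2) = x_2$. Uniqueness is immediate: any $C^1$ solution is constant along the integral curves of $V$ (that is the content of $V\cdot\nabla\mclW = 0$), and every such curve meets $\Gamen$ exactly once where the value is prescribed, so the solution is forced to equal the one just constructed.

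The main work, and the step I expect to be the real obstacle, is the regularity estimate \eqref{nf-4-c3}: showing $\mclW \in C^{1,\alp}(\ol\N)$ with the stated bound. This requires controlling the flow map $\mfh$ and its inverse in $C^{1,\alp}$. The $C^0$ and $C^1$ dependence of $\mfh(x_1;\cdot)$ on $y$ follows from differentiating the ODE in $y$ (the variational equation $\der_y \mfh$ solves a linear ODE with $C^{0,\alp}$ coefficients built from $\nabla(V_2/V_1)$, giving $\der_y\mfh$ bounded above and, crucially, below away from $0$ by Gr\"onwall, so the inverse map $\mfp$ is Lipschitz in $x_2$); the H\"older seminorm of $\der_y\mfh$ in the $x_1$-variable then comes from the $C^{0,\alp}$ regularity of the right-hand side of the variational equation, again via Gr\"onwall-type estimates. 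Chaining $\mclW = \mclWen\circ(0,\mfp)$ through these bounds, using that $\mclWen \in C^{1,\alp}(\Gamen)$ and that composition with a $C^{1,\alp}$ map preserves $C^{1,\alp}$, yields \eqref{nf-4-c3} with $C^*$ depending only on $L$, $\nu^*$, $K_0$ and $\alp$. I would organize this as a short sequence of lemmas on the flow map before assembling the final estimate, keeping careful track of how $\nu^*$ (lower bound on $V_1$) and $K_0$ (upper bound on $\|V\|_{1,\alp}^{(-\alp,\corners)}$) enter, and noting that the weight $(-\alp,\corners)$ on $V$ is harmless here since $V_2/V_1$ and its first derivatives are in fact bounded on all of $\ol\N$ once $V_1 \ge \nu^*$.
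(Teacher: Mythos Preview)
Your overall architecture --- construct the entrance-height map and pull back $\mclWen$ along it, argue uniqueness by propagation along characteristics --- is exactly what the paper does; your $\mfp$ is the paper's $\mathscr{L}$. The gap is in the regularity step, and it stems from a misreading of the weighted norm: the hypothesis $\|{\bf V}\|_{1,\alp,\N}^{(-\alp,\Gamw)}\le K_0$ does \emph{not} give $D{\bf V}$ bounded on $\ol\N$. Unpacking the definition, one gets only $|D{\bf V}(\rx)|\le K_0\,{\rm dist}(\rx,\Gamw)^{\alp-1}$, which blows up at the walls. So your claim that ``$V_2/V_1$ and its first derivatives are in fact bounded on all of $\ol\N$'' is false, and with it the Gr\"onwall argument on the variational equation collapses: the coefficient $\der_2(V_2/V_1)$ along a characteristic that approaches $\Gamw$ is not integrable in $x_1$ with a bound independent of the characteristic. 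The same blow-up also threatens Picard--Lindel\"of uniqueness for the boundary characteristics $x_2\equiv 0,1$, since $V_2/V_1$ is only $C^\alp$, not Lipschitz, in $x_2$ near $\Gamw$.

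The paper sidesteps this entirely by exploiting the one hypothesis you never invoke, $\Div{\bf V}=0$. From it one builds the stream function $w(x_1,x_2)=\int_0^{x_2}V_1(x_1,t)\,dt$, so that $\nabla^\perp w={\bf V}$ and the characteristics are precisely the level curves of $w$. Then $\mathscr{L}$ is realized as $\mathscr{L}=\mcl{G}^{-1}\circ w$ with $\mcl{G}(\vartheta)=w(0,\vartheta)$, and differentiation yields the closed-form expression
\[
\nabla\mathscr{L}(\rx)=\frac{\nabla w(\rx)}{\der_2 w(0,\mathscr{L}(\rx))}=-\frac{{\bf V}^\perp(\rx)}{V_1(0,\mathscr{L}(\rx))}.
\]
The right-hand side involves only ${\bf V}$, never $D{\bf V}$; since the weighted bound \emph{does} imply ${\bf V}\in C^{\alp}(\ol\N)$ unweighted and $V_1\ge\nu^*$, one reads off $\mathscr{L}\in C^{1,\alp}(\ol\N)$ with constant depending only on $L,\nu^*,K_0,\alp$, and \eqref{nf-4-c3} follows by composition. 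In your ODE language the same identity is $\der_y\mfh(x_1;y)=V_1(0,y)/V_1(x_1,\mfh(x_1;y))$, obtained by differentiating the conservation law $w(x_1,\mfh(x_1;y))=w(0,y)$ rather than by Gr\"onwall --- that is the missing idea.
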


\begin{proof}
Set
\begin{equation}
\label{w}
w(x_1,x_2):=\int_0^{x_2}V_1(x_1,y)\;dy\quad\tx{in}\quad \N.
\end{equation}
In fact, $w$ is the stream function of the flow.
It follows from \eqref{2d-3-b4}, \eqref{2d-3-b7}, and \eqref{2d-3-b8} that $w$ satisfies
\begin{equation}
\label{2d-3-c5}
\nabla^{\perp}w={\bf V}\;\;\;\tx{and}\;\; \;\der_2 w\ge \nu^*\;\;\tx{in}\;\;\N,\quad
\der_1 w=0\;\;\tx{on}\;\;\Gamw
\end{equation}
and
\begin{equation}\label{3-estw}
\|w\|_{2, \alpha, \N}^{(-1-\alpha, \Gamw)}\leq CK_0
\end{equation}
with the constant $C$ depending only on $L$.
Then the implicit function theorem implies that for any $x_1\in[0,L]$ and $\lambda\in[w(0,0), w(0,1)]$, there exists a unique $h(x_1,\lambda)\in[0,1]$ satisfying
\begin{equation*}
w(x_1,h(x_1,\lambda))=\lambda.
\end{equation*}
Moreover, such $h$ is continuously differentiable with respect to $x_1\in[0,L]$ and $\lambda\in [w(0,0), w(0,1)]$. For any given $\lambda\in [w(0,0), w(0,1)]$, note that $x_2=h(x_1,\lambda)$ is a $C^1$ curve crossing $x_1=0$ and $x_1=L$ exactly once. Therefore, for any $\rx=(x_1,x_2)\in\ol{\N}$, one can find a unique $\vartheta\in[0,1]$ satisfying
\begin{equation}
\label{2d-3-c4}
w(x_1,x_2)=w(0,\vartheta).
\end{equation}
It follows from \eqref{2d-3-c5} and \eqref{2d-3-c4} that if $\mclW$ solves \eqref{2d-3-b4}, then one has
\begin{equation}
\label{2d-3-c3}
\mclW(\rx)=\mclWen(0,\vartheta)
\end{equation}
Note that $\mcl{G}(\vartheta)=w(0,\vartheta)$ is an invertible function from $[0,1]$ onto $[w(0,0), w(0,1)]$. Set
\begin{equation}
\label{L}
\mathscr{L}(x_1,x_2)=\mcl{G}^{-1}\circ w(x_1,x_2).
\end{equation}
Then \eqref{2d-3-c3} implies that $\mclW$ given by
\begin{equation}
\label{2d-3-c6}
\mclW(\rx)=\mclWen\circ \mathscr{L}(\rx)
\end{equation}
solves \eqref{2d-3-b4}. Note that
\begin{equation}\label{gradL}
\nabla\mathscr{L}(\rx)=\frac{\nabla w(\rx)}{\der_2 w(0,\mathscr{L}(\rx))}=-\frac{{\bf V}^\perp(\rx)}{V_1(0,\mathscr{L}(\rx))}.
\end{equation}
Thus
\begin{equation}\label{estL}
\|\mathscr{L}\|_{1, \alpha, \N}\leq C \|{\bf V}\|_{\alpha, \N}.
\end{equation}
Combining this with \eqref{3-estw} and \eqref{2d-3-c6} gives that $\mclW$ defined by \eqref{2d-3-c6} satisfies the estimate \eqref{nf-4-c3} because $\mclW_0$ is a constant vector.

Let $\mclW^{(1)}$ and $\mclW^{(2)}$  be two solutions of \eqref{2d-3-b4} satisfying \eqref{nf-4-c3}.
Then $\hat\mclW=\mclW^{(1)}-\mclW^{(2)}$ solves
\begin{equation*}
{\bf V}\cdot\nabla \hat\mclW=0\;\;\tx{in}\;\;\N,\quad
\hat\mclW=0\;\;\tx{on}\;\;\Gamen.
\end{equation*}
It follows from \eqref{2d-3-c5} that $\hat\mclW$ is a constant along each level curve $x_2=h(x_1,\lambda)$ of $w$  given by \eqref{w}. For any given $\rx=(x_1,x_2)$, there exists unique $\lambda\in[w(0,0),w(0,1)]$ such that $x_2=h(x_1,\lambda)$. The level curve $x_2=h(x_1,\lambda)$ crosses $\Gamen$ on which $\hat\mclW=0$. From $\hat\mclW(\rx)=\hat\mclW(0,h(0,\lambda))=0$, we conclude that $\hat\mclW=0$ in $\N$, and this implies $\mclW^{(1)}=\mclW^{(2)}$ in $\N$ because $\rx$ is arbitrary in $\N$. This proves the uniqueness of a solution to \eqref{2d-3-b4}.
\end{proof}

Step 3. Define an iteration mapping $\mcl{J}:\mcl{P}(M)\to (C^{1, \alpha}(\bar\N))^2$ by
\begin{equation}
\label{2d-3-b5}
\mcl{J}\mclW^*= \mclW,
\end{equation}
where $\mclW$ is the solution of \eqref{2d-3-b4} associated with ${\bf V}$ defined in \eqref{v}.
And, choose positive constants $M$ and $\sigma$ so that the mapping $\mcl{J}$ defined by \eqref{2d-3-b5} maps $\mcl{P}(M)$ into itself. In Section \ref{section-pf-theorem2}, we also show that $\mcl{J}$ is a continuous map in $C^{1, \alpha/2}(\bar\N)$ so that $\mcl{J}$ has a fixed point in $\mcl{P}(M)$. This will prove (a) of Theorem \ref{nf-theorem3}.

Step 4. Using the stream function formulation in Step 2 and the estimate for the difference between solutions for the problem \eqref{nf-2-b6}--\eqref{eqSK} with \eqref{nf-3-a5}--\eqref{bcSK} in a weaker space $C^{1, \beta}(\bar\N)$ ($\beta<\alpha$) yields the uniqueness  of the solutions for the associated problem. The detail is also given in Section \ref{section-pf-theorem2}.

\section{Proof of Proposition \ref{proposition-1}}
\label{section-nlbvp}
In this section, we prove Proposition \ref{proposition-1} by the  iteration method. We first study a linear boundary value problem.
\subsection{Boundary value problem for the linear system}
Suppose that ${\mfF}=(\mcF_1,\mcF_2)\in (C^{1,\alp}_{(-\alp,\corners)}(\N))^2$, $\mff_1\in C^{\alp}(\ol{\N})$,  and $\mfg\in C^{1,\alp}_{(-\alp,\der\Gamex)}(\Gamex)$.  We first consider the following linear system
\begin{align}
\label{2d-4-b1}
&\begin{cases}
L_1(\Psi,\phi)=\Div {\bf \mfF}\\
L_2(\Psi,\phi)=\mff_1
\end{cases}\quad\tx{in}\quad \N
\end{align}
with boundary conditions
\begin{align}
\label{2d-4-b2}
\phi=0\,\,\tx{on}\;\;\Gamen,\quad
\der_2\phi=0\,\,\tx{on}\;\;\Gamw,\quad
\der_1\phi=\mfg\,\,\tx{on}\;\;\Gamex
\end{align}
and
\begin{align}\label{linearbcPsi}
\Psi=\Psi_{bd}\,\,\tx{on}\;\;\Gamen\cup\Gamex\quad \text{and}\quad
\der_2\Psi=0\,\,\tx{on}\;\;\Gamw.
\end{align}

To prove well-posedness of \eqref{2d-4-b1}-\eqref{linearbcPsi}, we need to take a closer look at the linear operators $L_1$ and $L_2$ defined by \eqref{2d-3-a2}. Let $(\bar{\rho}, \bar\bu, \bar p, \Phi_0)$ be the subsonic background solution associated with the parameters $b_0>0$, $S_0>0$, $J_0>0$, $\rho_0>\rhos$, $E_0$ and $L$. Set
\begin{equation}
\label{2d-4-b5}
\begin{split}
\mfraka_{ij}(\rx)=\der_{q_j}A_i(\mfV_0),
\quad
\mfrakb_i(\rx)=\der_{z}A_i(\mfV_0)
\quad \mfrakc_i(\rx)=\der_{q_i}B(\mfV_0),
\quad\;\;\mfrakd(\rx)=\der_{z}B(\mfV_0)
\end{split}
\end{equation}
for $i,j=1,2$, where ${\bf A}=(A_1,A_2)$ and $B$ are given by \eqref{nf-3-a6}. Then we have the following lemma.
\begin{lemma}
\label{nf-lemma-4-2}
Let $\mfraka_{ij}, \mfrakb_i, \mfrakc_i, \mfrakd$ be defined by \eqref{2d-4-b5}.
\begin{itemize}
\item[(a)]
The matrix $[\mfraka_{ij}(\rx)]_{i,j=1}^2$ is strictly positive and diagonal in $\N$, and there exits a constant $\nu_1>0$ satisfying
    \begin{equation}
    \label{nf-4-a6}
    \nu_1 { I_2}\le [\mfraka_{ij}(\rx)]_{i,j=1}^2\le \frac{1}{\nu_1}{ I_2}\quad\tx{for all}\quad \rx\in \N,
    \end{equation}
    where the constant $\nu_1$ depends only on the data;
\item[(b)] For each $k\in\mathbb{Z}_+$, there exists a constant $\mcl{C}_k>0$ depending on the data and $k$ such that
   \begin{equation}
   \label{2d-4-b6}
   \sum_{i,j=1}^2\|\mfraka_{ij}\|_{k,\N}
   +\sum_{i=1}^2\left(\|\mfrakb_i\|_{k,\N}+\|\mfrakc_i\|_{k,\N}\right)
   +\|\mfrakd\|_{k,\N}\le \mcl C_k;
    \end{equation}
\item[(c)] For each $i=1$ and $2$, we have
\begin{equation}
\label{nf-4-a7}
\mfrakb_i(\rx)+\mfrakc_i(\rx)=0\quad\tx{in}\quad \N;
\end{equation}
\item[(d)] There exists a constant $\nu_2>0$ depending only on the data such that
    \begin{equation}
    \label{2d-4-b7}
    \mfrakd(\rx)\ge \nu_2\quad\tx{in}\quad \N.
    \end{equation}
\end{itemize}
\begin{proof}
It follows from \eqref{2-b6} and \eqref{2d-2-a8} that $[\mfraka_{ij}]_{i,j=1}^2$ is a diagonal matrix with
\begin{equation}\label{nf-4-a9}
\mfraka_{11}= \bar{\rho}^{2-\gam}\left(1- \frac{\bar u^2}{\gam \mfp \exp (\frac{S_0}{c_v})\bar\rho^{\gam-1}}\right)
\quad \tx{and}\;\;\mfraka_{22}=
\bar{\rho}.
\end{equation}
And, \eqref{nf-4-a6} follows from \eqref{2d-1-a2} and \eqref{2-a7}. This proves (a). (b) easily follows from the smoothness of $(\bar{\rho}, \bar \bu, \bar p, \Phi_0)$. Direct computations for \eqref{2d-2-a8} and \eqref{nf-3-a6} give
\begin{equation*}
\mfrakb_i=-\mfrakc_i=\begin{cases}
\bar{\rho}^{2-\gam}\frac{\bar u}{\gam \mfp \exp (\frac{S_0}{c_v})}&\tx{for}\;\;i=1\\
0&\tx{for}\;\;i=2
\end{cases}\quad\tx{and}\quad
\mfrakd=\frac{\bar{\rho}^{2-\gam}}{\gam \mfp \exp (\frac{S_0}{c_v})}.
\end{equation*}
Then (c) is proved, and \eqref{2-a7} implies (d).
\end{proof}
\end{lemma}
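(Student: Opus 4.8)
The plan is to reduce everything to explicit formulas for the coefficients obtained by differentiating the closed-form expressions \eqref{2d-2-a8} and \eqref{nf-3-a6} and then evaluating at the background state $\mfV_0 = (S_0,\msK_0,\Phi_0,\nabla\vphi_0,{\bf 0})$. First I would compute $\der_{q_j} A_i$, $\der_z A_i$, $\der_{q_i} B$, and $\der_z B$ as functions of $(\varsigma,\eta,z,\q,\s)$. Since $A_i = B q_i$ with $B = H(\varsigma, \eta + z - \tfrac12|\q+\s^\perp|^2)$, the chain rule gives $\der_{q_j} A_i = B\,\delta_{ij} + q_i\,\der_{q_j} B$ and $\der_{q_j} B = -H_\zeta(\varsigma,\zeta)(q_j + (\s^\perp)_j)$, evaluated at $\zeta = \eta+z-\tfrac12|\q+\s^\perp|^2$; similarly $\der_z A_i = q_i H_\zeta$ and $\der_z B = H_\zeta$. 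At $\mfV_0$ one has $\s = {\bf 0}$, $\q = \nabla\vphi_0 = (\bar u, 0)$, and the argument $\zeta_0$ of $H$ equals $\msK_0 + \Phi_0 - \tfrac12 \bar u^2$, which by \eqref{2d-1-a5}, \eqref{2d-2-a7}, \eqref{2d-2-a8} is precisely the value for which $H(S_0,\zeta_0) = \bar\rho$. Using $H_\zeta(S_0,\zeta_0) = \tfrac{1}{\gamma-1}\zeta_0^{-1}H(S_0,\zeta_0)$ together with $\zeta_0 = \tfrac{\gamma\mfp}{\gamma-1}\exp(S_0/c_v)\bar\rho^{\gamma-1}$, I would obtain $H_\zeta(S_0,\zeta_0) = \bar\rho^{2-\gamma}/(\gamma\mfp\exp(S_0/c_v))$, which yields exactly the displayed formulas for $\mfrakb_i$, $\mfrakc_i$, $\mfrakd$ and for the off-diagonal/diagonal entries of $[\mfraka_{ij}]$.

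With these formulas in hand the four assertions follow quickly. For (a): since at $\mfV_0$ the vector $\q+\s^\perp = (\bar u,0)$ has vanishing second component, $\der_{q_2} B(\mfV_0) = 0$, so $\mfraka_{12} = \mfraka_{21} = 0$ and the matrix is diagonal with $\mfraka_{11} = \bar\rho + \bar u\,\der_{q_1}B(\mfV_0) = \bar\rho^{2-\gamma}\bigl(1 - \bar u^2/(\gamma\mfp\exp(S_0/c_v)\bar\rho^{\gamma-1})\bigr)$ and $\mfraka_{22} = \bar\rho$; the two-sided bound \eqref{nf-4-a6} then comes from the uniform bounds \eqref{2-a7} on $\bar\rho$ and from rewriting the subsonic condition $\gamma\mfp\exp(S_0/c_v)\bar\rho^{\gamma-1} - J_0^2/\bar\rho^2 \ge \nu_0$ using $\bar u = J_0/\bar\rho$ from \eqref{2d-1-a2}, which shows $\mfraka_{11}$ is bounded below by a positive constant depending only on the data. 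For (b): $(\bar\rho,\bar\bu,\bar p,\Phi_0)$ is smooth on $[0,L]$ by Proposition \ref{proposition-21}, hence all the coefficients, being smooth functions of $\bar\rho$ and $\bar u$, have all derivatives bounded on $\ol\N$. Assertion (c) is immediate from $A_i = Bq_i$: differentiating, $\der_z A_i(\mfV_0) = q_i \der_z B(\mfV_0)$ while $\der_{q_i} B(\mfV_0) = -H_\zeta\cdot(q_i + (\s^\perp)_i) = -q_i H_\zeta$ at $\mfV_0$, and $\der_z B = H_\zeta$, so $\mfrakb_i + \mfrakc_i = q_i H_\zeta - q_i H_\zeta = 0$; more directly, the explicit formulas show both equal $\pm \bar\rho^{2-\gamma}\bar u/(\gamma\mfp\exp(S_0/c_v))$ for $i=1$ and $0$ for $i=2$. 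Assertion (d) is just $\mfrakd = \bar\rho^{2-\gamma}/(\gamma\mfp\exp(S_0/c_v)) \ge \nu_2 > 0$, using the lower bound $\bar\rho \ge \rho_\sharp$ from \eqref{2-a7}.

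There is no real obstacle here: the lemma is a bookkeeping computation, and the only point requiring a little care is matching the argument $\zeta_0$ of $H$ at the background state with the Bernoulli relation so that $H(S_0,\zeta_0) = \bar\rho$ and the $H_\zeta$ simplification go through cleanly — this is what makes the cancellation in (c) and the clean form of $\mfrakd$ in (d) appear. The structural identity (c), $\mfrakb_i + \mfrakc_i \equiv 0$, is the one consequence that matters downstream: it is the algebraic reflection of the fact that the $\Psi$-dependence enters the two equations $L_1,L_2$ through the same combination, and it is exactly the feature exploited in \cite{BDX} to close the $H^1$ estimate for the linearized system \eqref{2d-4-b1}. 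I would therefore present the explicit formulas as the heart of the proof and note (c)–(d) as immediate corollaries.
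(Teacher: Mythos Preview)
Your proposal is correct and follows essentially the same route as the paper: compute $\der_{q_j}A_i$, $\der_z A_i$, $\der_{q_i}B$, $\der_z B$ via the chain rule applied to \eqref{2d-2-a8}--\eqref{nf-3-a6}, evaluate at $\mfV_0$ using $H(S_0,\zeta_0)=\bar\rho$ and the identity $H_\zeta=\bar\rho^{2-\gamma}/(\gamma\mfp\exp(S_0/c_v))$, and then read off (a)--(d) from the explicit formulas together with \eqref{2d-1-a2} and \eqref{2-a7}. The paper's proof is the same computation written more tersely; your added remark on the downstream role of the cancellation (c) in the $H^1$ estimate is accurate but extraneous to the lemma itself.
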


 Lemma \ref{nf-lemma-4-2} plays a crucial role in proving the existence of solution for the problem \eqref{2d-4-b1} and \eqref{2d-4-b2}. More precise statement is given in the following lemma.
\begin{lemma}
\label{2d-lemma-2}
Suppose that ${\mfF}=(\mcF_1,\mcF_2)\in (C^{1,\alp}_{(-\alp,\corners)}(\N))^2$, $\mff_1\in C^{\alp}(\ol{\N})$,  and $\mfg\in C^{1,\alp}_{(-\alp,\der\Gamex)}(\Gamex)$ for $\alpha\in (0,1)$. If, in addition,  $\Psi_{bd}$ satisfies the compatibility condition
\begin{equation}
\label{2d-4-c2}
\der_2\Psi_{bd}=0\quad\tx{on}\quad (\ol{\Gamen}\cap \ol{\Gamex})\cap \ol{\Gamw}.
\end{equation}
Then the linear boundary value problem \eqref{2d-4-b1}-\eqref{linearbcPsi} has a unique solution $(\phi, \Psi)\in (C^{1,\alp}(\ol{\N})\cap C^{2,\alp}(\N))^2$. Moreover, $(\phi,\Psi)$ satisfy the estimate
\begin{equation}
\label{2d-4-c1}
\begin{split}
&\|(\phi,\Psi)\|_{2,\alp,\N}^{(-1-\alp,\corners)}\le C_1^\sharp(\|\mfg\|_{1,\alp,\Gamex}^{(-\alp,\der\Gamex)}+\|\Psi_{bd}\|_{2,\alp,\N}^{(-1-\alp,\corners)}
+
\|\mfF\|_{1,\alp,\N}^{(-\alp,\corners)}+\|\mff_1\|_{\alp,\N})
\end{split}
\end{equation}
for a constant $C_1^\sharp>0$ depending only on the data and $\alp$.
\end{lemma}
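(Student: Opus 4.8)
The proof of Lemma \ref{2d-lemma-2} proceeds in two stages: first, unique solvability of \eqref{2d-4-b1}--\eqref{linearbcPsi} in $H^1(\N)\times H^1(\N)$ by a variational argument, and second, the upgrade to the weighted $C^{2,\alp}$ regularity together with the estimate \eqref{2d-4-c1}. Both stages are driven by the structural properties of $L_1,L_2$ collected in Lemma \ref{nf-lemma-4-2}. As a preliminary reduction I would use the compatibility condition \eqref{2d-4-c2} to subtract from $(\phi,\Psi)$ a fixed pair that realizes the Dirichlet data $\Psi_{bd}$ on $\Gamen\cup\Gamex$, the oblique data $\mfg$ on $\Gamex$, and the homogeneous Neumann data on $\Gamw$; the condition \eqref{2d-4-c2} is exactly what makes such a lifting available in the weighted class, since without it $\der_2\Psi$ would be forced to be simultaneously $0$ and $\der_2\Psi_{bd}$ at the corners. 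This turns \eqref{2d-4-b1} into a problem of the same form with homogeneous boundary conditions and modified right-hand sides whose norms are controlled by $\|\mfF\|_{1,\alp,\N}^{(-\alp,\corners)}+\|\mff_1\|_{\alp,\N}+\|\mfg\|_{1,\alp,\Gamex}^{(-\alp,\der\Gamex)}+\|\Psi_{bd}\|_{2,\alp,\N}^{(-1-\alp,\corners)}$.

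For the homogeneous reduced problem I would work in $\mcl{X}:=\{(\phi,\Psi)\in H^1(\N)^2:\ \phi=0\text{ on }\Gamen,\ \Psi=0\text{ on }\Gamen\cup\Gamex\}$ and consider its weak formulation, whose bilinear form is
\[
\mcl{A}\big((\phi,\Psi),(\phi',\Psi')\big)=\int_\N\Big(\sum_{i,j=1}^2\mfraka_{ij}\der_j\phi\,\der_i\phi'+\sum_{i=1}^2\mfrakb_i\,\Psi\,\der_i\phi'+\nabla\Psi\cdot\nabla\Psi'+\mfrakd\,\Psi\Psi'+\sum_{i=1}^2\mfrakc_i\,\der_i\phi\,\Psi'\Big)\,d\rx.
\]
When tested with $(\phi',\Psi')=(\phi,\Psi)$, the two first-order cross terms cancel by \eqref{nf-4-a7}, leaving $\int_\N\big(\sum_{i,j}\mfraka_{ij}\der_i\phi\,\der_j\phi+|\nabla\Psi|^2+\mfrakd\,\Psi^2\big)\,d\rx$, which by \eqref{nf-4-a6}, \eqref{2d-4-b7} and the Poincar\'e inequality on $\{\phi=0\text{ on }\Gamen\}$ is coercive on $\mcl{X}$; boundedness of $\mcl{A}$ is immediate from \eqref{2d-4-b6}. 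The boundary contributions produced by integration by parts all vanish or pass to the right-hand side: on $\Gamw$ the conormal term for $L_1$ is $(\mfraka_{22}\der_2\phi+\mfrakb_2\Psi)\phi'$, which vanishes because $\der_2\phi=0$ and $\mfrakb_2=0$ there, while $\der_{{\bf n}_w}\Psi=\der_2\Psi=0$ kills the $L_2$ term; on $\Gamen\cup\Gamex$ the trace constraints in $\mcl{X}$ dispose of the rest, leaving a bounded linear functional involving $\mfg$ and $\mfF$. Lax--Milgram then gives a unique weak solution $(\phi,\Psi)\in\mcl{X}$ with $\|(\phi,\Psi)\|_{H^1(\N)}\le C\big(\|\mfF\|_{L^2(\N)}+\|\mff_1\|_{L^2(\N)}+\|\mfg\|_{L^2(\Gamex)}+\|\Psi_{bd}\|_{H^1(\N)}\big)$; uniqueness for the original inhomogeneous problem follows at once.

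To obtain \eqref{2d-4-c1} I would localize and bootstrap. In the interior and along the open faces of $\der\N$ away from the four corners, standard Schauder theory applies: near $\Gamw$ one reflects evenly across $\{x_2=0\}$ and $\{x_2=1\}$, which is legitimate because $\mfraka_{11},\mfraka_{22},\mfrakd$ depend on $x_1$ only and $\mfrakb_2=\mfrakc_2=0$, so the reflected system keeps the same form and the Neumann conditions $\der_2\phi=\der_2\Psi=0$ are preserved; near $\Gamen\cup\Gamex$ off the corners one uses Dirichlet estimates for $\Psi$ and mixed Dirichlet/oblique-derivative estimates for $\phi$ on a half-space. The delicate part is the behavior at the corners $\corners$, where a Dirichlet condition meets a Neumann(-type) condition at a right angle and $C^{2,\alp}$ regularity up to the corner fails; here one invokes the weighted Schauder estimates for mixed boundary value problems in a corner domain --- precisely the estimates developed in \cite{BDX} (see also \cite{BDX2,Ha-L,GilbargTrudinger}) --- which furnish $C^{2,\alp}_{(-1-\alp,\corners)}$ control near $\corners$. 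Piecing together the local estimates by a covering-and-scaling argument and absorbing lower-order terms into the $H^1$ bound of the previous stage yields \eqref{2d-4-c1}, with $C_1^\sharp$ depending on the data only through $\nu_1$, $\nu_2$ and the constants $\mcl{C}_k$ of \eqref{2d-4-b6}, and on $\alp$. I expect the corner analysis --- identifying the correct weighted spaces and verifying the mixed Dirichlet/oblique estimate at the right-angle corners --- to be the main obstacle, while the energy estimate and the reductions are routine once Lemma \ref{nf-lemma-4-2} is available.
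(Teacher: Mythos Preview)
Your proposal is correct and follows essentially the same route as the paper: homogenize the $\Psi$-boundary data, apply Lax--Milgram on the space $\{\phi=0\text{ on }\Gamen,\ \Psi=0\text{ on }\Gamen\cup\Gamex\}$ using the cancellation $\mfrakb_i+\mfrakc_i=0$ for coercivity, then reflect across $\Gamw$ and apply weighted Schauder estimates near the corners. Two minor deviations worth flagging: the paper does \emph{not} also lift $\mfg$ but keeps it as a boundary integral in the right-hand functional (simpler, and avoids constructing a corner-compatible lifting of the oblique data), and in the regularity bootstrap the paper explicitly \emph{decouples} the system---first upgrading $\phi$ by treating $L_1(\Psi,\phi)=\Div\mfF$ as a scalar divergence-form equation with the already-obtained $C^\alp$ function $\hat\Psi$ absorbed into the right-hand side, then upgrading $\Psi$---rather than treating the $2\times2$ system directly; this sequential decoupling is what makes the scalar $C^{1,\alp}$ and weighted $C^{2,\alp}$ estimates immediately applicable.
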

\begin{proof}
The lemma is proved in three steps. Steps 2-3 are quite similar to that for \cite[Proposition 4.1]{BDX}. We give a brief sketch for these steps.  One can refer to \cite{BDX} for details.

Step 1. Define
\begin{equation*}
\Psi_{bd}^*(x_1,x_2)=\chi(x_1)\Psi_{bd}(0,x_2)+(1-\chi(x_1))\Psi_{bd}(L,x_2)
\end{equation*}
for a smooth function $\chi(x_1)$ satisfying
$\chi'(x_1)\le 0$, $|\chi'(x_1)|\le \frac{6}{L}$ and
$$
\chi(x_1)= 1\,\, \tx{for}\;\;x_1\le \frac L3\quad \text{and}\quad \chi(x_1)=0\,\,\tx{for}\;\;x_1\ge \frac{2L}{3}.
$$
It follows from \eqref{2d-4-c2} that $\Psi_{bd}^*$ satisfies
\begin{equation*}
\der_2\Psi_{bd}^*=0\quad\tx{on}\quad\Gam_w.
\end{equation*}
$(\phi,\Psi)$ solves \eqref{2d-4-b1}-\eqref{linearbcPsi} if and only if $(\phi,\hat{\Psi}):=(\phi,\Psi-\Psi_{bd}^*)$ satisfies
\begin{equation}
\label{2d-4-c3}
\begin{cases}
L_1(\hat{\Psi},\phi)=\Div({\mfF}-{\bm\mfrakb}\Psi^*_{bd})=:\Div{\mfF}^*\\
L_2(\hat{\Psi},\phi)=(\mff_1-\mfrakd\Psi^*_{bd})-\Div(\nabla\Psi^*_{bd})=:\mff_1^*+\Div{\mfG}^*
\end{cases}\quad\tx{in}\quad\N
\end{equation}
with boundary conditions \eqref{2d-4-b2} and
\begin{equation}\label{bchPsi}
\hat\Psi=0\,\,\tx{on}\;\;\Gamen\cup\Gamex\quad\text{and}\quad
\der_2\hat\Psi=0\,\,\tx{on}\;\;\Gamw.
\end{equation}

Define  $\mcl{H}= \{(\zeta,\omega)\in [H^1(\N)]^2: \zeta=0\;\tx{on}\;\Gamen, \omega=0\;\tx{on}\;\Gamen\cup\Gamex\}$. If $(\phi, \hat\Psi)\in \mcl{H}$ satisfies
\begin{equation}
\label{2d-4-c4}
\mathfrak{L}[(\phi,\hat\Psi),(\zeta,\omega)]=\langle({\mfF}^*,\mfg,\mff_1^*, {\mfG}^*),(\zeta,\omega)\rangle
\end{equation}
for all $(\zeta,\omega)\in \mcl{H}$ where
\begin{equation*}
\mathfrak{L}[(\phi,\hat{\Psi}),(\zeta,\omega)]
=\int_{\N} \sum_{i=1}^2(\mfraka_{ii}\der_i\phi+\mfrakb_i\hat{\Psi})\der_i\zeta
+\nabla\hat{\Psi}\cdot\nabla\omega+(\mfrakc\cdot\nabla\phi+\mfrakd\hat{\Psi})\omega\;d\rx
\end{equation*}
and
\begin{equation*}
\begin{split}
\langle({\mfF}^*,\mfg,\mff_1^*, {\mfG}^*),(\zeta,\omega)\rangle
=&\int_{\N}{\mfF}^*\cdot\nabla\zeta+{\mfG}^*\cdot\nabla\omega-\mff_1^*\omega\;d\rx\\
&-\int_{\der\N}({\mfF}^*\cdot{\bf n}_{out})\zeta+({\mfG}^*\cdot{\bf n}_{out})\omega \;ds+
\int_{\Gamex}\mfraka_{11}\mfg\zeta \;dx_2
\end{split}
\end{equation*}
with ${\bf n}_{out}$ the outward unit normal of $\der\N$, then we call $(\phi, \hat\Psi)$ the weak solution of the problem \eqref{2d-4-c3}, \eqref{2d-4-b2}, and \eqref{bchPsi}. It is easy to see that the classical solution of \eqref{2d-4-c3}, \eqref{2d-4-b2}, and \eqref{bchPsi} must be a weak solution.

Using (a), (c) and (d) of Lemma \ref{nf-lemma-4-2} and Poincar\'{e} inequality yields that there exists a constant $\nu_3>0$ depending only on the data to satisfy
\begin{equation*}
\begin{split}
\mathfrak{L}[(\zeta,\omega),(\zeta,\omega)]
\ge \int_{\N}\nu_1|\nabla\zeta|^2+|\nabla\omega|^2+\nu_2\omega^2\;d\rx
\ge {\nu_3}\left(\|\zeta\|^2_{H^1(\N)}+\|\omega\|^2_{H^1(\N)}\right)
\end{split}
\end{equation*}
for all $(\zeta,\omega)\in \mcl{H}$. This implies that the bilinear operator $\mathfrak{L}:\mcl{H}\times\mcl{H}\to \R$ is coercive. It is easy to see that
\begin{equation}
|\mathfrak{L}[(\phi, \hat\Psi),(\zeta,\omega)]|\leq C (\|\phi\|_{H^1(\N)}+ \|\hat \Psi\|_{H^1(\N)}) (\|\zeta\|_{H^1(\N)}+ \|\omega\|_{H^1(\N)}).
\end{equation}
This means that $\mathfrak{L}$ is a bounded bilinear functional on $\mcl{H}\times \mcl{H}$.
Furthermore, it follows from the trace inequality and H\"{o}lder inequalities that one can find a constant $\mcl{C}$ depending only on the data to satisfy
\begin{equation*}
\begin{split}
&|\langle({\mfF}^*,\mfg,\mff_1^*, {\mfG}^*),(\zeta,\omega)\rangle|\\
&\le \mcl{C} (\|{\mfF}^*\|_{L^{\infty}(\N)}+\|{\mfG}^*\|_{L^{\infty}(\N)}
+\|\mff_1^*\|_{L^{\infty}(\N)} +\|\mfg\|_{L^{\infty}(\Gamex)})\left(\|\zeta\|_{H^1(\N)}+\|\omega\|_{H^1(\N)}\right)
\end{split}
\end{equation*}
for all $(\zeta,\omega)\in \mcl{H}$. From now on, the constant $\mcl{C}$  depends only on the data and $\alp$, which may vary from line to line.
Then Lax-Milgram theorem and Cauchy-Schwartz inequalities imply that there exists unique $(\phi,\hat{\Psi})\in\mcl{H}$ satisfying \eqref{2d-4-c4} and the estimate
\begin{equation}
\label{3-c4}
\|\phi\|_{H^1(\N)}+\|\hat{\Psi}\|_{H^1(\N)}\le
\mcl{C}(\|{\mfF}^*\|_{L^{\infty}(\N)}+\|{\mfG}^*\|_{L^{\infty}(\N)}
+\|\mff_1^*\|_{L^{\infty}(\N)} +\|\mfg\|_{L^{\infty}(\Gamex)}).
\end{equation}

Step 2.  Combining \eqref{3-c4}, H\"{o}lder inequality, Sobolev inequality and Poincar\'{e} inequality gives that
the weak solution $(\phi,\hat{\Psi}) \in\mcl{H}$ satisfies
\begin{equation}
\label{2d-4-c5}
\|\phi\|_{\alp,\N}+\|\hat{\Psi}\|_{\alp,\N}\le
\mcl{C}(\|{\mfF}^*\|_{L^{\infty}(\N)}+\|{\mfG}^*\|_{L^{\infty}(\N)}
+\|\mff_1^*\|_{L^{\infty}(\N)} +\|\mfg\|_{L^{\infty}(\Gamex)}).
\end{equation}

Substitute $\omega=0$ into \eqref{2d-4-c4} and regard $\phi$ as a weak solution of
\begin{equation*}
\int_{\N}\sum_{i=1}^2\mfraka_{ii}\der_i\phi\der_i \zeta=
\int_{\N}({\mfF}^*-\mfrakb \hat{\Psi})\cdot\nabla\zeta \;d\rx
-\int_{\N}({\mfF}^*\cdot{\bf n}_{out}\zeta)\;ds+\int_{\Gamex}\mfraka_{11}\mfg\zeta\;dy.
\end{equation*}
Then it follows from   \eqref{3-c4}, \eqref{2d-4-c5} and method of reflection with respect to $\Gamw$ that one has
\begin{equation}
\label{2d-4-c6}
\|\phi\|_{1,\alp,\N}\le \mcl{C}(\|{\mfF}^*\|_{\alp,\N}+\|\hat{\Psi}\|_{\alp,\N}+\|\mfg\|_{\alp,\Gamex}).
\end{equation}
 Once we have \eqref{2d-4-c6}, the equation for $\Psi$ can be regarded as an elliptic equation of divergence form. Using the compatibility condition \eqref{2d-4-c2} and the method of reflection with respect to $\Gamw$ yields
\begin{equation}
\label{2d-4-c7}
\|\Psi\|_{1,\alp,\N}\le \mcl{C}(\|{\mfG}^*\|_{\alp,\N}+\|\mff_1^*\|_{0,\N}+\|\phi\|_{1,\alp,\N}).
\end{equation}

Step 3. The scaling argument and Schauder estimate, together with \eqref{2d-4-c6}, \eqref{2d-4-c7}, give
\begin{equation}
\label{2d-4-c8}
\|\phi\|_{2,\alp,\N}^{(-1-\alp,\corners)}\le \mcl{C}(\|\phi\|_{1,\alp,\N}+\|{\mfF}^*\|_{1,\alp,\N}^{(-\alp,\corners)}+\|\mfg\|_{1,\alp,\Gamex}^{(-\alp,\der\Gamex)})
\end{equation}
and
\begin{equation}\label{2d-4-c81}
\|\hat{\Psi}\|_{2,\alp,\N}^{(-1-\alp,\corners)}\le \mcl{C}(\|\phi\|_{2,\alp,\N}^{(-1-\alp,\corners)}+\|{\mfG}^*\|_{1,\alp,\N}^{(-\alp,\corners)}+\|\mff_1^*\|_{\alp,\N}).
\end{equation}
It follows from \eqref{2d-4-c8}-\eqref{2d-4-c81}  that the linear boundary value problem \eqref{2d-4-c3} is uniquely solvable in $[C^1(\ol{N})\cap C^2(\N)]^2$. Therefore,   the boundary value problem \eqref{2d-4-b1} and \eqref{2d-4-b2} is uniquely solvable in the same space. Finally, the estimate \eqref{2d-4-c1} easily follows from \eqref{2d-4-c8} and \eqref{2d-4-c81}.
\end{proof}

\begin{remark}
Since $\N$ is a rectangle, one can use the reflection with respect to $\Gamw$ to get estimates \eqref{2d-4-c6} and \eqref{2d-4-c7}, which is simpler than that for the multidimensional domain case in \cite{BDX}.
\end{remark}

\subsection{Nonlinear boundary value problem}
Now we are in position to prove Proposition \ref{proposition-1}
by Lemma \ref{2d-lemma-2} and the contraction mapping principle.
\begin{proof}
[Proof of Proposition \ref{proposition-1}]
Define
\begin{equation*}
\begin{aligned}\label{nf-4-a1}
\mcl{K}(M_1):=\Big\{\mclU =(\Psi, \phi, \psi) \!\in \!(C^{2,\alp}_{(-1-\alpha, \Gamw)}({\N}))^3: &  \|\mclU\|_{2,\alp,\N}^{(-1-\alp,\corners)} \!\!\le\! M_1\sigma, \\
 &\phi=0\,\, \text{on}\,\,\Gamen,\,\, \psi =0 \,\,\text{on}\,\, \partial\N\setminus \Gamen\Big\}
\end{aligned}
\end{equation*}
where  $\sigma =\om_1(b)+\om_2(\Sen,\msB_{en})+\om_3(\Phi_{bd},\pex)$
and $M_1$ is a positive constant to be determined later satisfying $M_1\sigma\leq \delta_0/2$. Since $C_{(-1-\alpha, \Gamw)}^{2, \alpha}(\N)$ is a Banach space, it is easy to see that $\mcl{K}(M_1)$ is a closed set of the Banach space.

Note that if $\mclW^*\in \mcl{P}(M)$ and $(z, {\bf q}, {\bf s})$ satisfies
$|(z, {\bf q}, {\bf s})|\le \delta_0/2$, then it follows from \eqref{defdelta} that $F_i(\rx,\mclW^*-\mclW_0, z, {\bf q}, {\bf s})$(for $i=1,2$), $f_1(\rx,\mclW^*-\mclW_0, z, {\bf q}, {\bf s})$, $f_2(\rx,\mclW^*-\mclW_0, z, {\bf q}, {\bf s},\der_2 \mclW^*)$ and $g(\rx,\mclW^*-\mclW_0, {\bf q},{\bf s},\pex)$ are smooth with respect to $(z, {\bf q}, {\bf s})$, and they satisfy
\begin{equation}
\label{2d-4-d1}
\begin{split}
&|(f_1,F_1,F_2)(\rx,\mclW^*-\mclW_0, z, {\bf q}, {\bf s})|,\\
&\phantom{aaaa}\le C\left(M\sigma+|{\bf s}|+(M\sigma+ |{\bf s}|+| z|+|{\bf q}|)(| z|+|{\bf q}|)\right),\\
&|D_{(z,{\bf q})}(f_1,F_1,F_2)(\rx,\mclW^*-\mclW_0, z, {\bf q}, {\bf s})|\le C(M\sigma+|{\bf s}|+|{z}|+|{\bf q}|),\\
&|g(\rx,\mclW^*-\mclW_0, {\bf q}, {\bf s})|
\le C((M+1)\sigma+|{\bf q}|^2+|{\bf s}|),\\
&|D_{{\bf q}}g(\rx,\mclW^*-\mclW_0, {\bf q},{\bf s})|
\le C(|{\bf q}|+|{\bf s}|),\\
&|D^k_{(z,{\bf q}, {\bf s})}f_2(\rx,\mclW^*-\mclW_0, z, {\bf q}, {\bf s},\der_2 \mclW^*)|\le CM\sigma\quad\tx{for}\;\;k=0,1,
\end{split}
\end{equation}
for all $\rx\in\N$ where $C$ depends only on the data. Here, $F_{i}$, $f_{i}$ ($i=1, 2$) and $g$ are defined in \eqref{nf-F}, \eqref{nf-f1}, \eqref{nf-3-b5} and \eqref{defg}.

For any $\til{\mclU}=(\til \Psi, \til \phi, \til \psi) \in \mcl{K}(M_1)$, let  $\til f_2=f_2(\rx,\mclW^*-\mclW_0,\til \Psi,\nabla \til{\phi},\nabla \til{\psi},\partial_2 \mclW^*)$. It follows from  \cite{Ha-L, BDX, Bae-F}  that
 the boundary value problem
\begin{equation}
\label{2d-4-b3}
\Delta\psi=\til f_2\;\;\tx{in}\;\;\N
\end{equation}
with boundary condition \eqref{s-bc-psi}
has a unique solution $\psi\in C^{1,\alp}(\ol{\N})\cap C^{2,\alp}(\N)$ satisfying
\begin{equation}
\label{2d-4-estpsi}
\|\psi\|_{2,\alp,\N}^{(-1-\alp,\corners)}\le C_2^\sharp \|\til f_2\|_{\alp,\N}
\le C_1^{\flat}\sigma\left(M+1\right)
\end{equation}
with the constants $C_2^{\sharp}$ and $C_1^\flat$  depending only on the data and $\alp$.

Set $(\til{F}_{1},\til{F}_2,\til f_1)=({F}_{1},{F}_2,f_1)(\rx,\tilde{Q})$, and $\til g=g(\rx,\til{\varsigma},\til{\eta},\til{\bf q}, {\bf s})$ are well defined by \eqref{nf-F}, \eqref{nf-f1}, \eqref{nf-3-b5} and \eqref{defg} for
$\tilde{Q}=(\til{\varsigma},\til{\eta},\til z,\til{\bf q},{\bf s})
=(\mclW^*-\mclW_0, \til{\Psi},\nabla\til{\phi},\nabla {\psi})$ where $\psi$ is the solution for the boundary value problem \eqref{2d-4-b3} and \eqref{s-bc-psi}. Set $\til{\bf F}=(\til F_1, \til F_2)$. It follows from Lemma \ref{2d-lemma-2} and \eqref{2d-4-d1} that the linear boundary value problem \eqref{2d-4-b1}-\eqref{linearbcPsi} with ${\mfF}$, $\mff_1$, and $\mfg$ replaced by $\til{\bf F}$, $\til f_1$, and $\til g$, respectively
has a unique solutions $(\Psi,\phi)\in [C^{1,\alp}(\ol{\N})\cap C^{2,\alp}(\N)]^2$ which satisfies
\begin{equation}
\label{2d-4-d2}
\|(\Psi,\phi)\|_{2,\alp,\N}^{(-1-\alp,\corners)}
\le C_2^{\flat}(\left(M+1\right)\sigma +
\left(M_1\sigma\right)^2)
\end{equation}
with the constant $C_2^\flat$ depending only on the data and $\alpha$.
 Choose constants $M_1$ and $\hat{\sigma}$ as
\begin{equation}
\label{2d-4-d3}
M_1=2\max(1, C_1^{\flat}\left(M+1), 2C_2^{\flat}(M+1)\right),\quad
\hat{\sigma}=\frac 12
\min\left(\frac{1}{4C_2^\flat M_1}, \frac{1}{M_1^2}, \frac{\delta_0}{M_1}\right)
\end{equation}
so that \eqref{2d-4-d2} and \eqref{2d-4-estpsi} imply that $\mclU=(\Psi, \phi, \psi)$ satisfies
\begin{equation}
\label{2d-4-d4}
\begin{split}
&\|\mclU\|_{2,\alp,\N}^{(-1-\alp,\corners)}
\le M_1\sigma.
\end{split}
\end{equation}

Define an iteration mapping $\mcl{I}^{\mclW^*}$ by $\mcl{I}^{\mclW^*}(\til{\mclU})=\mclU$. Then \eqref{2d-4-d4} implies that $\mcl{I}$ maps $\mcl{K}(M_1)$ into itself  for any $\sigma\le \hat{\sigma}$.

For $i=1$, $2$, let $\til f_2^{(i)} =f_2(\mclW^*-\mclW_0,\til \Psi^{(i)},\nabla\til \phi^{(i)}, \nabla\til \psi^{(i)}, \partial_2 \mclW^*)$ and ${\bf \til F}^{(i)}$, $\til f_1^{(i)}$, and $\til g^{(i)}$ be given by \eqref{nf-F}, \eqref{nf-f1}, \eqref{nf-3-b5} and \eqref{defg} for ${Q}=(\mclW^*-\mclW_0,\til \Psi^{(i)},\nabla\til \phi^{(i)}, \nabla\psi^{(i)})$. Then the straightforward computations give
\begin{equation*}
\|\til {f}_2^{(1)}-\til {f}_2^{(2)}\|_{\alp,\N}
\le CM \sigma\|\til \mclU^{(1)}-\til \mclU^{(2)}\|_{2,\alp,\N}^{(-1-\alp,\corners)}
\end{equation*}
and
\begin{align*}
&\|\til{\bf F}^{(1)}-\til{\bf F}^{(2)}\|_{1,\alp,\N}^{(-\alp,\corners)}
+ \|\til{f}_1^{(1)}-\til{f}_1^{(2)}\|_{1,\alp,\N}^{(-\alp,\corners)} +
\|\til g^{(1)}-\til g^{(2)}\|_{1,\alp,\Gamex}^{(-\alp,\der\Gamex)}\\
\le & C\left(
(M+M_1+1)\sigma\|\til \mclU^{(1)}-\til \mclU^{(2)}\|_{1,\alp,\N}^{(-\alp,\corners)}
+\|\psi^{(1)}-\psi^{(2)}\|_{2,\alp\N}^{(-1-\alp,\corners)}\right)
\end{align*}
for a constant $C$ depending only on the data and $\alp$. It follows from  \eqref{2d-4-estpsi} and Lemma \ref{2d-lemma-2} that
\begin{align*}
\|\psi^{(1)}-\psi^{(2)}\|_{2,\alp,\N}^{(-1-\alp,\corners)}\le &
CM\sigma \|\til \mclU^{(1)}-\til \mclU^{(2)}\|_{1,\alp,\N}^{(-\alp,\corners)},
\end{align*}
and
\begin{align*}
\|(\Psi^{(1)}-\Psi^{(2)}, \phi^{(1)}-\phi^{(2)})\|_{2,\alp,\N}^{(-1-\alp,\corners)}
\le & C
(M+M_1+1)\sigma\|\til \mclU^{(1)}-\til \mclU^{(2)}\|_{1,\alp,\N}^{(-\alp,\corners)}\\
&+ C
\|\psi^{(1)}-\psi^{(2)}\|_{1,\alp,\N}^{(-\alp,\corners)} ,
\end{align*}
where the constant $C$ depends only on the data and $\alpha$.
Therefore,
\begin{equation}
\label{2d-3-d1}
\|\mclU^{(1)}-\mclU^{(2)}\|_{2,\alp,\N}^{(-1-\alp,\corners)}
\le C_3^\flat(M+M_1+1)\sigma\|\til \mclU^{(1)}-\til \mclU^{(2)}\|_{2,\alp,\N}^{(-1-\alp,\corners)}
\end{equation}
where the constant $C_3^\flat$ depends only on the data and $\alp$. Finally, choose $\sigma_5$ to be
\begin{equation}
\label{2d-3-d2}
\sigma_5=\min(\hat{\sigma}, \frac{4}{5C_3^\flat(M+M_1+1)})
\end{equation}
where $\hat\sigma$ is defined in \eqref{2d-4-d3}.
Thus if $\sigma<\sigma_5$, then the mapping $\mcl{I}^{\mclW^*}$ is a contraction mapping  so that  $\mcl{I}^{\mclW^*}$ has a unique fixed point in $\mcl{K}(M_1)$. This also gives the unique existence of a solution to \eqref{2d-4-a2}--\eqref{2d-4-a5} with \eqref{2d-3-c2}.
\end{proof}

\section{Proof of Theorem \ref{nf-theorem3}}\label{section-pf-theorem2}
In this section, we give the proof of Theorem \ref{nf-theorem3}. The stream function formulation plays an important role.

\begin{proof}
[Proof of Theorem \ref{nf-theorem3}]
To prove Theorem \ref{nf-theorem3}, it suffices to show that there exists unique $(\mclU, \mclW)\in [C^1(\ol{\N})\cap C^2(\N)]^3\times [C(\ol{\N})\cap C^1({\N})]^2$ solving  \eqref{nf-3-c1}, \eqref{nf-3-b2} and \eqref{eqSK} in $\N$ with boundary conditions \eqref{s-bc-psi}, \eqref{bcSK}, \eqref{3-d3}, \eqref{2d-3-b2} and \eqref{2d-3-bcPsi}. Since unique solvability of \eqref{2d-4-a2}--\eqref{2d-4-a5} with boundary conditions \eqref{s-bc-psi}, \eqref{bcSK},  \eqref{2d-3-b2}, \eqref{2d-3-bcPsi}, and \eqref{2d-4-a3} associated with $\mclW^*$ is guaranteed by Proposition \ref{proposition-1}, we prove Theorem \ref{nf-theorem3} by an iteration scheme for $\mclW$.

Let $\sigma_5$ in Proposition \ref{proposition-1} be given by \eqref{2d-3-d2}.  Then
Proposition \ref{proposition-1} implies that if $\sigma\le \sigma_5$, then for any fixed $\mclW^* \in \mcl{P}(M)$, the nonlinear boundary value problem \eqref{2d-4-a2}--\eqref{2d-4-a5} with boundary conditions \eqref{s-bc-psi}, \eqref{bcSK},  \eqref{2d-3-b2}, \eqref{2d-3-bcPsi}, and \eqref{2d-4-a3} has a unique solution $\mclU$ satisfying the estimate \eqref{2d-3-c2}.


The straightforward computations show that ${\bf V}$ defined in \eqref{v} satisfies
\begin{equation}
\|{\bf V}-(J_0, 0)\|_{1, \alpha, \N}^{(-\alpha, \Gamw)}\leq C_4^\flat (M +M_1)\sigma
\end{equation}
where the constant $C_4^\flat$ depends only on the data and $\alpha$. Let $C_0$ be $C^*$ in \eqref{nf-4-c3} associated with $K_0=2J_0$ and $\nu^*=J_0/2$. Choose
\begin{equation}
\label{2d-4-e3}
M=2 C_0,\quad \sigma_3=\min\{\sigma_5, \frac{J_0}{2C_4^\flat(M+M_1)}\}.
\end{equation}
Then Lemma \ref{nf-lemma-4-1} shows that \eqref{2d-3-b4} has a unique solution $\mclW$ satisfying  \eqref{nf-4-c3}.
Thus the mapping \eqref{2d-3-b5} maps $\mcl{P}(M)$ into itself.

Note that $\mcl{P}(M)$ is convex and a compact subset of $[C^{1,\frac{\alp}{2}}(\ol{\N})]^2$. Take a sequence $\{\mclW^{*,(k)}\}_{k=1}^{\infty}\subset\mcl{P}(M)$ convergent in $C^{1,\frac{\alp}{2}}(\ol{\N})$  to $\mclW^{*, (\infty)}\in \mcl{P}(M)$. Set $ \mclW^{(k)}:=\mcl{J}(\mclW^{*, (k)})$ for each $k\in\mathbb{N}$ and $ \mclW^{(\infty)}:=\mcl{J}(\mclW^{*,(\infty)})$.

For $k\in \mathbb{N}$, let ${\mcl{U}}^{(k)}:=(\Psi^{(k)},\phi^{(k)},\psi^{(k)})\in \mathcal{K}(M_1)$ be the unique solution of \eqref{2d-4-a2}--\eqref{2d-4-a5} with boundary conditions \eqref{s-bc-psi}, \eqref{bcSK},  \eqref{2d-3-b2}, \eqref{2d-3-bcPsi}, and \eqref{2d-4-a3} associated with $\mclW^*=\mclW^{*,(k)}$. Therefore, $\{\mcl{U}^{(k)}\}$ is sequentially compact in $C_{(-1-\alpha/2, \Gamw)}^{2, \alpha/2}(\N)$. Define ${\bf V}^{(k)}$ by
\begin{equation*}
{\bf V}^{(k)}=H(S^{*,(k)},\msK^{*,(k)}+\Phi_0+\Psi^{(k)}-\frac 12|\nabla\vphi_0+\nabla\phi^{(k)}+\nabla^{\perp}\psi^{(k)}|^2)
(\nabla\vphi_0+\nabla\phi^{(k)}+\nabla^{\perp}\psi^{(k)}).
\end{equation*}
 Note that the limit of each convergent subsequence of $\{\mclU^{(k)}\}$ in $C_{(-1-\alpha/2, \Gamw)}^{2, \alpha/2}(\N)$ solves \eqref{2d-4-a2}--\eqref{2d-4-a5} with boundary conditions \eqref{s-bc-psi}, \eqref{bcSK},  \eqref{2d-3-b2}, \eqref{2d-3-bcPsi}, and \eqref{2d-4-a3} associated with $\mclW^*=\mclW^{*,(\infty)}$. Therefore, $\{\mclU^{(k)}\}_{k=1}^\infty$ is convergent in $C_{(-1-\alpha/2, \Gamw)}^{2, \alpha/2}(\N)$. Denote its limit by  $\mclU^{(\infty)}$. Thus ${\bf V}^{(k)}$ converges to
\begin{equation*}
{\bf V}^{(\infty)}=H(S^{(\infty)},\msK^{(\infty)}+\Phi_0+\Psi^{(\infty)}-\frac 12|\nabla\vphi_0+\nabla\phi^{(\infty)}+\nabla^{\perp}\psi^{(\infty)}|^2)
(\nabla\vphi_0+\nabla\phi^{(\infty)}+\nabla^{\perp}\psi^{(\infty)})
\end{equation*}
in $C^{\alpha/2}(\bar\N)$.
Hence it follows from \eqref{2d-3-c6} and \eqref{gradL} that $\mclW^{(k)}$ converges to $\mclW^{(\infty)}$ in $C^{1, \alpha/2}(\bar\N)$  which is the solution of \eqref{2d-3-b4} associated with ${\bf V}^{(\infty)}$. This means that  $\mcl{J}(\mclW^{*,(k)})$ converges to $\mcl{J}(\mclW^{*,(\infty)})$ in $C^{1, \alpha/2}(\bar \N)$.
Thus $\mcl{J}$ is a continuous map in $C^{1, \alpha/2}(\bar\N)$ .
Applying the Schauder fixed point theorem (\cite[Theorem 11.1]{GilbargTrudinger}) yields that $\mcl{J}$ has a fixed point. This proves (a) of Theorem \ref{nf-theorem3}.


Let $(\mclU^{(i)},\mclW^{(i)})$ ($i=1, 2$)  be two solutions of \eqref{nf-3-c1}, \eqref{nf-3-b2} and \eqref{eqSK} in $\N$ with boundary conditions  \eqref{s-bc-psi}, \eqref{bcSK}, \eqref{3-d3}, \eqref{2d-3-b2} and \eqref{2d-3-bcPsi} satisfying \eqref{theorem3-est2}.  Assume that $\om_4(\Sen,\msB_{en})$  defined by \eqref{theorem1-as1} is finite for some $\mu>2$. Choose $\mu_1\in (2, \min\{\mu, \frac{1}{1-\alpha}\})$.
Set $\beta=\frac 12\min(\alp, 1-\frac{2}{\mu_1})$. We  show that there exists a constant $C$ depending only on the data, $\alp$ and $\mu$ to satisfy
\begin{equation}
\label{2d-4-f1}
\begin{split}
&\|{\mcl{U}}^{(1)} -\mcl{U}^{(2)} \|_{1,\beta,\N} \le C\kappa \|{\mcl{U}}^{(1)} -\mcl{U}^{(2)} \|_{1,\beta,\N},
\end{split}
\end{equation}
where
\begin{equation*}
\kappa:= \sigma +\om_4(\Sen, \msB_{en}, \Phi_{bd})
\end{equation*}
with  $\sigma = \om_1(b)+\om_2(\Sen, \msB_{en})+\om_3(\Phi_{bd},\pex)$.
Once \eqref{2d-4-f1} is verified, one can choose a constant $\sigma_4\in(0,\sigma_3]$ so that if $\kappa\le \sigma_4$, then \eqref{2d-4-f1} implies $\mclU^{(1)}=\mclU^{(2)}$ in $\N$. Hence it follows from Lemma \ref{nf-lemma-4-1}  that
 $\mclW^{(1)}=\mclW^{(2)}$. This proves the uniqueness of solutions. Thus the rest of the proof is devoted to
 verify \eqref{2d-4-f1}.

For each $j=1$ and $2$, let ${\bf F}^{(j)}, f_1^{(j)}, f_2^{(j)}$ and $g^{(j)}$ be given by \eqref{nf-F}, \eqref{nf-f1}, \eqref{nf-3-b5} and \eqref{defg} associated with ${Q}=(\mclW^{(j)}-\mclW_0, \Psi^{(j)}, \nabla\phi^{(j)}, \nabla\psi^{(j)})$ and $({\xi},{\tau})=\der_2 \mclW^{(j)}$. It follows from \eqref{2d-4-c6} and \eqref{2d-4-c7} that one has
\begin{equation}\label{4-star1}
\begin{aligned}
&\|(\phi^{(1)}-\phi^{(2)}, \Psi^{(1)}-\Psi^{(2)})\|_{1,\beta,\N}\\
\le &
C ( \|{\bf F}^{(1)}-{\bf F}^{(2)}\|_{\beta,\N}
+\|{f_1}^{(1)}-{f_1}^{(2)}\|_{0,\N}
+\|{g}^{(1)}-{g}^{(2)}\|_{\beta,\N}).
\end{aligned}
\end{equation}
From now on, the constant $C$ depends only on the data, $\beta$, $\mu$, and $\N$, which may vary from line to line. It follows from \cite[Theorem 3.13]{Ha-L} and the method of reflection that the difference of $\psi^{(1)}$ and $\psi^{(2)}$ satisfies
\begin{equation}\label{w2pest}
\|\psi^{(1)}-\psi^{(2)}\|_{1, \beta, \N}\leq C \|f_2^{(1)}-f_2^{(2)}\|_{L^{\mu_1}(\N)}.
\end{equation}
The straightforward computations give
\begin{equation}
\label{2d-4-f2}
\begin{split}
&\|{\bf F}^{(1)}-{\bf F}^{(2)}\|_{\beta,\N}
+\|{f_1}^{(1)}-{f_1}^{(2)}\|_{0,\N}
+\|{g}^{(1)}-{g}^{(2)}\|_{\beta,\N}\le  C\Big(\|\psi^{(1)}-\psi^{(2)}\|_{1,\beta,\N}\\
& + (M+M_1+1)\sigma \|(\phi^{(1)}-\phi^{(2)}, \Psi^{(1)}-\Psi^{(2)})\|_{1,\beta,\N} +
\|\mclW^{(1)}-\mclW^{(2)}\|_{\beta,\N}\Big)
\end{split}
\end{equation}
and
\begin{equation}
\label{2d-4-f3}
\begin{split}
&\|f_2^{(1)}-f_2^{(2)}\|_{L^{\mu_1}(\N)}\\
\le &
C\Bigl(M\sigma \|\mcl{U}^{(1)}-\mcl{U}^{(2)}\|_{1,\beta,\N}
+\|\mclW^{(1)}-\mclW^{(2)}\|_{0,\N}+
\|\der_2\mclW^{(1)}-\der_2\mclW^{(2)}\|_{L^{\mu_1}(\N)} \Bigr).
\end{split}
\end{equation}

Now we need to estimate $\|\mclW^{(1)}-\mclW^{(2)}\|_{\beta,\N}$ and $\|\der_2\mclW^{(1)}-\der_2\mclW^{(2)}\|_{L^{\mu_1}(\N)}$.
For each $j=1$ and $2$, $\mclW^{(j)}=\mclWen\circ\mathscr{L}^{(j)}$ where $\mathscr{L}^{(j)}$ is given by \eqref{L} associated with the vector field
\begin{equation}
\label{2d-4-f4}
{\bf V}^{(j)}=H(S^{(j)}, \msK^{(j)}+\Phi_0+\Psi^{(j)}-\frac 12|\nabla\vphi_0+\nabla\phi^{(j)}+\nabla^{\perp}\psi^{(j)}|^2)
(\nabla\vphi_0+\nabla\phi^{(j)}+\nabla^{\perp}\psi^{(j)}).
\end{equation}
Note that
\begin{equation}\label{estwb}
\begin{aligned}
\|\mclW^{(1)}-\mclW^{(2)}\|_{\beta, \N} = &\|\mclWen\circ\mathscr{L}^{(1)}(\rx) - \mclWen\circ\mathscr{L}^{(2)}(\rx)\|_{\beta, \N}\\
= &\|\int_0^1 \mclWen'(\theta \mathscr{L}^{(1)} +(1-\theta) \mathscr{L}^{(1)}) d\theta (\mathscr{L}^{(1)}-\mathscr{L}^{(2)})\|_{\beta, \N}\\
\leq & \|\mclWen'\|_{\beta, \N} \|\mathscr{L}^{(1)}-\mathscr{L}^{(2)}\|_{\beta, \N}\\
\leq & C\sigma  \|\mathscr{L}^{(1)}-\mathscr{L}^{(2)}\|_{\beta, \N}.
\end{aligned}
\end{equation}
It follows from \eqref{2d-3-c4} and \eqref{L} that
\begin{equation*}
w^{(1)}(0,\mathscr{L}^{(1)}(\rx))-w^{(2)}(0,\mathscr{L}^{(2)}(\rx))
=w^{(1)}(\rx)-w^{(2)}(\rx)
\end{equation*}
for all $\rx\in\N$, where  $w^{(j)}$ is defined by \eqref{w} corresponding to ${\bf V}={\bf V}^{(j)}$ and satisfies
$\nabla^{\perp}w^{(j)}={\bf V}^{(j)}$.
 Therefore,
\begin{equation}
\label{2d-4-f6}
(\mathscr{L}^{(1)}-\mathscr{L}^{(2)})(\rx)
=\frac{(w^{(1)}-w^{(2)})(\rx)-(w^{(1)}-w^{(2)})(0,\mathscr{L}^{(2)}(\rx))}
{\int_0^1 V_1^{(1)}(0,t\mathscr{L}^{(1)}(\rx)+(1-t)\mathscr{L}^{(2)}(\rx))\;dt}.
\end{equation}
Thus we have
\begin{equation}\label{estLdiff}
\|(\mathscr{L}^{(1)}-\mathscr{L}^{(2)})(\rx)\|_{\beta, \N}\leq C(\|\mcl{U}^{(1)}-\mcl{U}^{(2)}\|_{1,\beta,\N}+ \|\mclW^{(1)}-\mclW^{(2)}\|_{\beta,\N}).
\end{equation}
Combining \eqref{estwb} and \eqref{estLdiff} together and choosing $\sigma$ suitably small yield
\begin{equation}
\label{2d-4-f8}
\|\mclW^{(1)}-\mclW^{(2)}\|_{\beta,\N}
\le C\sigma\|\mcl{U}^{(1)}-\mcl{U}^{(2)}\|_{1,\beta,\N}.
\end{equation}
Furthermore, the direct computation gives
$$\der_2\mclW^{(j)}(\rx)=
\mclWen'(\mathscr{L}^{(j)}(\rx))\der_2\mathscr{L}^{(j)}(\rx)\,\,\text{for}\,\,  j=1\,\,\text{and}\,\,  2$$
and
\begin{equation}
\label{2d-4-f9}
\begin{split}
\|\der_2(\mclW^{(1)}-\mclW^{(2)})\|_{L^{\mu_1}(\N)}\le & \|\mclWen'(\mathscr{L}^{(1)})-\mclWen'(\mathscr{L}^{(2)})\|_{L^{\mu_1}(\N)}
\|\mathscr{L}^{(1)}\|_{0,\N}\\
&+
\|\mclWen'\|_{0,\N}
\|\der_2(\mathscr{L}^{(1)}-\mathscr{L}^{(2)})\|_{L^{\mu_1}(\N)}
\end{split}
\end{equation}
Note that \eqref{gradL} gives
\begin{equation}\label{estkey1}
\der_2\mathscr{L}^{(1)}-\der_2\mathscr{L}^{(2)}=\frac{V_1^{(1)}(\rx)}{V_1^{(1)}(0, \mathscr{L}^{(1)}(\rx))}- \frac{V_1^{(2)}(\rx)}{V_1^{(2)}(0, \mathscr{L}^{(2)}(\rx))}.
\end{equation}
Since $\phi^{(i)} \in C_{2, \alpha, \N}^{(-1-\alpha, \Gamw)}$ for $i=1$, 2, one has $\phi^{(i)} \in C_{2, \N}^{(-1-\alpha, \Gamw)}$ so that
\begin{equation}
|\partial \nabla \phi(\rx)|\leq \frac{\|\phi\|_{2, \alpha, \N}^{(-1-\alpha, \Gamw)}}{|x_2(1-x_2)|^{1-\alpha}}.
\end{equation}
 Thus for $\mu_1\in (0, \frac{1}{1-\alpha})$, we have
\begin{equation}\label{estkey2}
\begin{aligned}
& \|\nabla \phi^{(1)}(0, \mathscr{L}^{(1)}) -\nabla \phi^{(2)}(0, \mathscr{L}^{(2)})\|_{L^{\mu_1}(\N)} \\
\le & C \left\|\int_0^1 \der_2\nabla \phi^{(1)}(0, t\mathscr{L}^{(1)}+(1-t)\mathscr{L}^{(2)} )dt\right\|_{L^{\mu_1}(\N)} \|\mathscr{L}^{(1)}-\mathscr{L}^{(2)})\|_{L^\infty(\N)}\\
& +C \|(\nabla \phi^{(1)}-\nabla \phi^{(2)})(0, \mathscr{L}^{(2)})\|_{L^{\mu_1}(\N)}\\
\leq & C \|\phi\|_{2, \alpha, \N}^{(-1-\alpha, \Gamw)}(\|\mcl{U}^{(1)}-\mcl{U}^{(2)}\|_{1,\beta,\N}+ \|\mclW^{(1)}-\mclW^{(2)}\|_{\beta,\N}) + C\|\mclU^{(1)}-\mclU^{(2)}\|_{1,\N}.
 \end{aligned}
 \end{equation}
Similarly, we have
\begin{equation}\label{estkey3}
\begin{aligned}
& \|\nabla \mclU^{(1)}(0, \mathscr{L}^{(1)}) -\nabla \mclU^{(2)}(0, \mathscr{L}^{(2)})\|_{L^{\mu_1}(\N)} \\
\leq & C \|\mclU\|_{2, \alpha, \N}^{(-1-\alpha, \Gamw)}(\|\mcl{U}^{(1)}-\mcl{U}^{(2)}\|_{1,\beta,\N}+ \|\mclW^{(1)}-\mclW^{(2)}\|_{\beta,\N}) + C\|\mclU^{(1)}-\mclU^{(2)}\|_{1,\N}.
 \end{aligned}
\end{equation}
Furthermore, we have
\begin{equation}\label{estkey4}
\begin{aligned}
& \| \mclW^{(1)}(0, \mathscr{L}^{(1)}) - \mclW^{(2)}(0, \mathscr{L}^{(2)})\|_{L^{\mu_1}(\N)} =  \| \mclWen(\mathscr{L}^{(1)}) - \mclWen( \mathscr{L}^{(2)})\|_{L^{\mu_1}(\N)}  \\
\leq & C \sigma(\|\mcl{U}^{(1)}-\mcl{U}^{(2)}\|_{1,\beta,\N}+ \|\mclW^{(1)}-\mclW^{(2)}\|_{\beta,\N}).
 \end{aligned}
\end{equation}
Combining the estimate \eqref{2d-4-f9}-\eqref{estkey4} yields
\begin{equation}\label{2d-4-W}
\|\der_2(\mclW^{(1)}-\mclW^{(2)})\|_{L^{\mu_1}(\N)}\le C\kappa (
\|\mcl{U}^{(1)}-\mcl{U}^{(2)}\|_{1,\beta,\N} + \|\mclW^{(1)}-\mclW^{(2)}\|_{\beta,\N})).
\end{equation}
Substituting \eqref{2d-4-f8}-\eqref{2d-4-W} into \eqref{2d-4-f2} and \eqref{2d-4-f3} gives
\begin{equation}
\label{2d-4-g1}
\|{\bf F}^{(1)}-{\bf F}^{(2)}\|_{\beta,\N}
+\|{f_1}^{(1)}-{f_1}^{(2)}\|_{0,\N}
+\|{g}^{(1)}-{g}^{(2)}\|_{\beta,\N}
\le C\kappa\|\mcl{U}^{(1)}-\mcl{U}^{(2)}\|_{1,\beta,\N},
\end{equation}
and
\begin{equation}\label{2d-4-g11}
\|f_2^{(1)}-f_2^{(2)}\|_{L^{\mu_1}(\N)}
\le C\kappa\|\mcl{U}^{(1)}-\mcl{U}^{(2)}\|_{1,\beta,\N}.
\end{equation}
It follows from \eqref{4-star1} and  \eqref{2d-4-g1} that we have
\begin{equation}\label{estdiff1}
\|(\phi^{(1)}-\phi^{(2)}, \Psi^{(1)}-\Psi^{(2)})\|_{1,\beta,\N}\le
C_5^{\flat}\kappa \|\mcl{U}^{(1)}-\mcl{U}^{(2)}\|_{1,\beta,\N}
\end{equation}
for a constant $C_5^{\flat}$ depending on the data, $\alp$ and $\mu$. Similarly,  \eqref{w2pest} and  \eqref{2d-4-g11} yield
\begin{equation}\label{estdiff2}
\|\psi^{(1)}-\psi^{(2)}\|_{1,\beta,\N}\le C_6^{\flat}\kappa\|\mcl{U}^{(1)}-\mcl{U}^{(2)}\|_{1,\beta,\N}
\end{equation}
where the constant $C_6^{\flat}$ depends on the data and $\mu$.  Choose $\sigma_4=\min(\sigma_5, \frac{4}{5(C_5^{\flat}+C_6^{\flat})})$. Then \eqref{2d-4-f1} is a direct consequence of \eqref{estdiff1} and \eqref{estdiff2}. The proof is complete.
\end{proof}

\bigskip

{\bf Acknowledgement.}
The research of Myoungjean Bae and Ben Duan was supported in part by Priority Research Centers Program through the National Research Foundation of Korea(NRF) (NRF Project no. 2012047640). The research of Myoungjean Bae was also supported by the Basic Science Research Program(NRF-2012R1A1A1001919) and TJ Park Science Fellowship of POSCO TJ Park Foundation.
The research of Xie was supported in part by  NSFC 11241001 and 11201297, Shanghai Chenguang program and Shanghai Pujiang  program 12PJ1405200.
Part of the work was done when Xie was visiting POSTECH and Bae and Duan visited  Shanghai Jiao Tong University. They thank these institutions for their hospitality and support during these visit.

\bigskip

\end{document}